\newtheorem{theorem}{Theorem}[section]
\newtheorem{lemma}[theorem]{Lemma}
\newtheorem{corollary}[theorem]{Corollary}
\theoremstyle{definition}
\renewcommand{\geq}{\geqslant}
\renewcommand{\leq}{\leqslant}
\renewcommand{\k}{k}
\begin{document}
%
\title{
Orthogonal trades in complete sets of MOLS
}

\author{
Nicholas J. Cavenagh\\
nickc@waikato.ac.nz\\
Department of Mathematics and Statistics\\
University of Waikato, 
Private Bag 3105, 
New Zealand \\
\\
Diane M. Donovan \\
dmd@maths.uq.edu.au\\
Centre for Discrete Mathematics and Computing,\\
School of Mathematics and Physics,\\
University of Queensland,
St Lucia 4072 Australia \\
\\
Fatih Demirkale \\
fatihd@yildiz.edu.tr\\
Department of Mathematics\\
Y\i ld\i z Technical University\\
Esenler, 34220,  \.{I}stanbul, Turkey\\
}

\maketitle

\begin{abstract}
Let $B_p$ be the Latin square given by the addition table for the integers modulo  an odd prime $p$. 
Here we consider the properties of Latin trades in $B_p$ which
preserve orthogonality with one of the $p-1$ MOLS given by the finite field construction.
We show that for certain choices of the orthogonal mate, there is a lower bound logarithmic in $p$ for the number of
times each symbol occurs in such a trade, with an overall lower bound of $(\log{p})^2/\log\log{p}$ for the size of such a trade.
Such trades imply the existence of orthomorphisms of the cyclic group which differ from a linear orthomorphism by a small amount. We also show that any transversal in $B_p$ hits the main diagonal either $p$ or at most $p-\log_2{p}-1$ times.
Finally, if $p\equiv 1\mod{6}$ we show the existence of Latin square  containing a $2\times 2$ subsquare which is orthogonal to $B_p$. 
\end{abstract}

\textbf{Keywords:} Orthogonal array, MOLS, trade, orthomorphism, transversal.

\section{Introduction and Definitions}

Let $p$ be an odd prime.
Consider the ``complete'' set of $p-1$ MOLS of order $p$, constructed via the finite field of order $p$. (It is conjectured, but not yet proven, that a complete set of MOLS of order $p$ is unique up to isomorphism.)
The problem considered in this paper is how to change a ``small'' number of entries in one of these Latin squares so that it maintains orthogonality with at least
one other Latin square in the complete set of MOLS.

To this end, for each $k$, $1\leq k\leq p-1$,
define $B_p(k)$ to be the Latin square
where the entry in cell $(i,j)$ of $B_p(k)$ is given by $ki+j$,
for each $i,j\in {\mathbb Z}_p$.
(In the above and throughout this paper, arithmetic is performed modulo $p$ with residues in ${\mathbb Z}_p$ whenever the context makes this clear.)
Then it is well-known that
$${\mathcal B}_p:=\{B_p(1),B_p(2),\dots ,B_p(p-1)\}$$ is a set of $p-1$ MOLS of order $p$. For convenience we often write $B_p$ instead of $B_p(1)$. 

The Latin squares $B_7$ and $B_7(3)$ are given in Figure \ref{figg1}.
Observe that after each symbol is replaced by its subscript in $B_7$, the
Latin squares remain orthogonal.
We will refer to this change as an {\em orthogonal trade}.
We are interested in determining general properties of orthogonal trades; in particular lower bounds for the size of an orthogonal trade.

\begin{figure}
$$
\begin{array}{|c|c|c|c|c|c|c|}
\hline
0_3 & 1_4 & 2 & 3_0 & 4_1 & 5 & 6 \\
\hline
1 & 2 & 3 & 4 & 5 & 6 & 0 \\
\hline
2 & 3_6 & 4_5 & 5_3 & 6_4 & 0 & 1 \\
\hline
3_5 & 4_3 & 5_4 & 6 & 0 & 1 & 2 \\
\hline
4 & 5 & 6_0 & 0_1 & 1_6 & 2 & 3 \\
\hline
5_0 & 6_1 & 0_6 & 1_5 & 2 & 3 & 4 \\
\hline
6 & 0 & 1 & 2 & 3 & 4 & 5 \\
\hline
\end{array}
\quad
\begin{array}{|c|c|c|c|c|c|c|}
\hline
0 & 1 & 2 & 3 & 4 & 5 & 6 \\
\hline
3 & 4 & 5 & 6 & 0 & 1 & 2 \\
\hline
6 & 0 & 1 & 2 & 3 & 4 & 5 \\
\hline
2 & 3 & 4 & 5 & 6 & 0 & 1 \\
\hline
5 & 6 & 0 & 1 & 2 & 3 & 4 \\
\hline
1 & 2 & 3 & 4 & 5 & 6 & 0 \\
\hline
4 & 5 & 6 & 0 & 1 & 2 & 3 \\
\hline
\end{array}$$
\caption{An orthogonal trade in $B_7$}
\label{figg1}
\end{figure}

Considering a Latin square of order $n$ to be a set of ordered $($row, column, entry$)$
triples  (in this paper a subset of $\Bbb{Z}_n\times \Bbb{Z}_n \times\Bbb{Z}_n$), a {\em Latin trade} is a subset $T$ of a Latin square $L$ such that
there exists a partially filled-in Latin square $T'$ (called a
{\em disjoint mate} of $T$) such that
for each $(i,j,k)\in T$ (respectively, $T'$), there exists
unique $i'\neq i$, $j'\neq j$ and $k'\neq k$ such that
$(i',j,k),(i,j',k)$ and $(i,j,k')\in T'$ (respectively, $T$).
It follows that $(L\setminus T)\cup T'$ is a Latin square not equal to $L$.
In fact, Latin trades describe differences between Latin squares of the same order; see \cite{cavsurvey} for more details.

We define an {\em orthogonal trade}
(in ${\mathcal B}_p$)
of index $(\ell,k)$ to be a
 Latin trade $T\subset B_p(\ell)$ such that there exists a disjoint mate $T'$ such that $(B_p(\ell)\setminus T)\cup T'$
is orthogonal to $B_p(k)$. Thus Figure \ref{figg1} gives an example of an orthogonal trade in ${\mathcal B}_p$ of index $(1,3)$.

Using symmetries of ${\mathcal B}_p$, we may assume certain properties of an orthogonal trade therein.  
In this paper,  $k^{-1}$ is always taken to be the least non-negative integer representing the congruence class of $k^{-1}$ (mod $p$). 

\begin{lemma}
Let $T$ be an orthogonal trade in ${\mathcal B}_p$ of index $(\ell,k)$.
Then we may assume, without loss of generality, that
$\ell=1$, $k\leq k^{-1}$
and $(0,0,0)\in T$.
\label{iceice}
\end{lemma}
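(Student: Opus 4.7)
The plan is to carry out three successive normalizations, each realized by a structure-preserving symmetry of the pair $(B_p(\ell), B_p(k))$. Throughout I use that orthogonality of two Latin squares is preserved by a common row or column permutation, by independent entry permutations on either square, and by simultaneous transposition; and that Latin trades are preserved by all of these operations, with size and symbol multiplicities unchanged.

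For $\ell=1$, I apply the row permutation $\sigma(i)=\ell^{-1}i$: cell $(i,j)$ of $B_p(\ell)$ then contains $\ell(\ell^{-1}i)+j=i+j$, so $B_p(\ell)$ becomes $B_p(1)$, and the same permutation sends $B_p(k)$ to $B_p(k\ell^{-1})$. The trade $T$ together with its mate $T'$ transports accordingly, and I rename $k\ell^{-1}$ as $k$. For $k\leq k^{-1}$, if instead $k>k^{-1}$ I transpose everything. Since $B_p(1)$ is symmetric, $(B_p(1))^t=B_p(1)$; the transpose of $B_p(k)$ has entry $kj+i$ in cell $(i,j)$, and applying the entry permutation $s\mapsto k^{-1}s$ to the second square, an operation which does not touch $T$ or $T'$ and preserves orthogonality, yields $B_p(k^{-1})$. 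The transposed trade $T^t$ is then a Latin trade in $B_p(1)$ of size $|T|$ with mate $(T')^t$, producing an orthogonal trade of index $(1,k^{-1})$, so I pick whichever of $k,k^{-1}$ is numerically smaller.

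Finally, if $T$ is non-empty I choose any $(r,c,r+c)\in T$ and apply the translations $i\mapsto i-r$ and $j\mapsto j-c$ together with the entry shifts $s\mapsto s-r-c$ on the first square and $s\mapsto s-kr-c$ on the second. These jointly fix $B_p(1)$ and $B_p(k)$ as Latin squares, preserve orthogonality, and send the chosen triple to $(0,0,0)$. The only real subtlety is the second step, where the transposed $B_p(k)$ fails to lie in $\mathcal{B}_p$ and must be realigned via an entry permutation on the mate before $B_p(k^{-1})$ becomes visible; the remaining verifications are routine.
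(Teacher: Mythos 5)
Your proof is correct and takes essentially the same route as the paper: normalize by symmetries of $\mathcal{B}_p$, namely a scaling to force $\ell=1$, a transpose-type symmetry (with a symbol relabelling on the mate square) swapping $k$ and $k^{-1}$, and translations to move a chosen triple of $T$ to $(0,0,0)$. If anything, your third step is slightly more careful than the paper's, whose map $\phi''$ only translates columns and entries and so by itself only relocates a triple within its row; your additional row translation with compensating entry shifts on both $B_p(1)$ and $B_p(k)$ closes that small gap.
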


\begin{proof}
Let $1\leq x\leq p-1$.
The mapping $\phi:(a,b,ax+b)\rightarrow (a,b/\ell,(ax+b)/\ell)$ maps
$B_p(x)$ onto $B_p(x/\ell)$ and thus acts as a bijection on the set ${\mathcal B}_p$. We may thus assume that $\ell=1$.
Next, the mapping $\phi':(a,b,ax+b)\rightarrow (b,-a/x,(b-a)/x)$ maps
 maps $B_p(x)$ to $B_p(x^{-1})$ (again as part of a bijection on the set ${\mathcal B}_p$), fixing $B_p(1)$ and mapping $B_p(k)$ to $B_p(k^{-1})$.
We may thus assume $k\leq k^{-1}$. Finally, if $0\leq i\leq p-1$,
the map $\phi'':(a,b,ax+b)\rightarrow (a,b+i,ax+b+i)$ maps each element of ${\mathcal B}_p$ to itself, allowing us to assume
that $(0,0,0)\in T$.
\end{proof}

It is possible, of course, to consider Latin trades which preserve orthogonality within pairs of MOLS that do not necessarily belong to ${\mathcal B}_p$. The spectrum of possible sizes of such Latin trades is explored in \cite{DDSS16}.
However for the rest of the paper we assume that any orthogonal trade
is always in $B_p$ with the assumptions of the previous lemma.

\section{The theory of Latin trades in $B_p$}

In this section we give relevant known results and theory of Latin trades in $B_p$ - that is, the operation table for the integers modulo $p$, also known as the 
{\em back circulant Latin square}. Since an orthogonal trade necessarily is also a Latin trade in $B_p$, this theory will be useful in later sections. 

A {\em trade matrix} $A=[a_{ij}]$ is an $m\times m$ matrix with integer 
entries such that for all $1\leq i,j\leq m$: (1) $a_{ii}>0$; (2) $a_{ij}\leq 0$ whenever $i\neq j$ and 
(3) $\sum_{j=1}^m a_{ij}\geq 0$.  

\begin{lemma} {\rm (Lemma 7 of \cite{Cav1})}:\ 
If $A=[a_{ij}]$ is an $m\times m$ trade matrix, $\det(A)\leq \Pi_{i=1}^{m} a_{ii}$.
\label{bcc0}
\end{lemma}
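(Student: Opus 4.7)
The plan is to induct on the matrix size $m$. The base case $m=1$ is immediate since $\det(A) = a_{11}$. For the inductive step the idea is to perform one sweep of Gaussian elimination down the first column: because $a_{11}>0$ and each $a_{i1}\le 0$, every multiplier $a_{i1}/a_{11}$ is non-positive, so replacing $R_i$ by $R_i - (a_{i1}/a_{11})R_1$ for $i\ge 2$ preserves the determinant while zeroing out the first column below $a_{11}$. I would write $B = [b_{ij}]_{i,j \ge 2}$ with $b_{ij} = a_{ij} - a_{i1}a_{1j}/a_{11}$ for the resulting Schur complement, so that $\det(A)=a_{11}\det(B)$, and then argue that $B$ is itself a trade matrix whose diagonal entries have not increased beyond $a_{ii}$.

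The three verifications to carry out are routine. First, for $i\ne j$ the subtracted term $a_{i1}a_{1j}/a_{11}$ is nonnegative, so $b_{ij}\le a_{ij}\le 0$. Second, the row sum $\sum_{j\ge 2} b_{ij} = \sum_{j\ge 2} a_{ij} - (a_{i1}/a_{11})\sum_{j\ge 2} a_{1j}$ is nonnegative, because the row-sum conditions of $A$ applied to rows $1$ and $i$ give $\sum_{j\ge 2} a_{ij}\ge -a_{i1}$ and $\sum_{j\ge 2} a_{1j}\ge -a_{11}$. Third, $b_{ii} = a_{ii} - a_{i1}a_{1i}/a_{11}$ is at most $a_{ii}$ since the subtracted term is nonnegative, and it is at least $0$ because the weak diagonal-dominance inequalities $a_{ii}\ge -a_{i1}$ and $a_{11}\ge -a_{1i}$ combine to yield $a_{i1}a_{1i}/a_{11}\le -a_{i1}\le a_{ii}$. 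Assuming every $b_{ii}$ is strictly positive, $B$ is itself a trade matrix, the inductive hypothesis gives $\det(B)\le \prod_{i\ge 2} b_{ii}\le \prod_{i\ge 2} a_{ii}$, and multiplying by $a_{11}$ completes the step.

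The one point requiring care, and the main obstacle, is the degenerate case where some $b_{ii}=0$ after elimination, in which $B$ is not strictly a trade matrix and the induction cannot be invoked directly. Fortunately the shape of $B$ handles this for free: in row $i$ of $B$ the diagonal is $0$, the off-diagonal entries are $\le 0$, and the row sum is $\ge 0$, which forces every entry of that row to be zero. Hence $\det(B)=0$ and the desired bound $\det(A) = 0 \le \prod a_{ii}$ is trivial. Apart from this boundary case, the argument is a direct Gaussian-elimination induction exploiting the (weak) diagonal dominance built into the trade-matrix definition.
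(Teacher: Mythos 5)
Your argument is correct, and it is worth noting that the paper itself gives no proof of this lemma at all --- it is quoted verbatim as Lemma 7 of \cite{Cav1} --- so your Schur-complement induction stands as a self-contained justification rather than a variant of anything in the text. The three verifications you list are exactly the right ones, and they all check out: $b_{ij}\le a_{ij}\le 0$ off the diagonal because $a_{i1}a_{1j}\ge 0$; the row sums of $B$ are nonnegative because $\sum_{j\ge 2}a_{ij}\ge -a_{i1}$ and $\sum_{j\ge 2}a_{1j}\ge -a_{11}$ combine with the nonnegative multiplier $-a_{i1}/a_{11}$; and $0\le b_{ii}\le a_{ii}$ follows from $-a_{1i}\le a_{11}$ and $-a_{i1}\le a_{ii}$. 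Your treatment of the degenerate case $b_{ii}=0$ (the whole row of $B$ must vanish, so $\det A=0\le \prod a_{ii}$) is also the right way to keep the induction honest. The only blemish is terminological: the trade-matrix definition in the paper demands \emph{integer} entries, while the Schur complement $B$ is in general only rational, so strictly speaking $B$ is not a trade matrix and the inductive hypothesis as literally stated does not apply to it. This is cosmetic --- integrality plays no role in the bound --- and is fixed by running the induction over real (or rational) matrices satisfying conditions (1)--(3), of which the integer trade matrices are a special case; alternatively one can scale $B$ by $a_{11}$ to restore integrality at the cost of tracking the factor. With that one-line adjustment your proof is complete.
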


The following lemmas are implied by the theory in \cite{Cav1}. 
The results therein are expressed in terms of symbols rather than rows; however statements about rows, columns and symbols are equivalent due to 
 equivalences of $B_p$. 
\begin{lemma}
Let $x_1,x_2,\dots ,x_{m},x_{m+1}$ be the non-empty rows of a Latin trade $T$ in $B_p$. 
Then there exists an $(m+1)\times (m+1)$ trade matrix $A$ such that 
$AX=B$, where $X=(x_1,x_2, \dots ,x_m,x_{m+1})^T$, $a_{ii}$ gives the number of entries in row $x_i$ of $T$ and $B$ is an $(m+1)\times 1$ vector of integers, each a multiple of $p$. 
Moreover, the row and column sums of $A$ are each equal to $0$.  
\label{bcc1}
\end{lemma}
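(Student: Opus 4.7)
The plan is to construct $A$ from the cell-by-cell shifts between $T$ and a disjoint mate $T'$. For each cell $(x_i,c)\in T$, let $C_i$ denote the set of columns occupied by row $x_i$ of $T$, and define the shift $\delta(x_i,c):=t'(x_i,c)-(x_i+c)\pmod p$, where $t'(x_i,c)$ is the symbol of $T'$ at that cell. Disjointness of $T$ and $T'$ forces $\delta\neq 0$ everywhere, while preservation of the multiset of symbols in each row gives $\sum_{c\in C_i}\delta(x_i,c)\equiv 0\pmod p$ for each non-empty row $x_i$.

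The key step is that preservation of the multiset of symbols in each column forces $t'(x_i,c)=x_{j}+c$ for some row $x_j$ of $T$, so that $\delta(x_i,c)\equiv x_{j}-x_i\pmod p$ with $j\neq i$. For $i\neq j$ let $n_{ij}$ count the columns $c\in C_i$ for which this $j$ arises; set $a_{ii}:=|C_i|$ (the number of entries in row $x_i$) and $a_{ij}:=-n_{ij}$ for $i\neq j$. Substituting the identification $\delta(x_i,c)\equiv x_{j(c)}-x_i$ into the row-sum identity and grouping by $j$ yields $a_{ii}x_i+\sum_{j\neq i}a_{ij}x_j\equiv 0\pmod p$, that is, each entry of $AX$ is an integer multiple of~$p$.

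The matrix conditions are then routine. By construction $a_{ii}>0$ and $a_{ij}\leq 0$ for $i\neq j$. For the row sum, each $c\in C_i$ contributes to exactly one $n_{ij}$ with $j\neq i$, so $\sum_{j\neq i}n_{ij}=a_{ii}$ and hence $\sum_j a_{ij}=0$. For the column sum, note that for each column $c$ in which row $x_j$ appears, the map $i\mapsto j(c)$ is a fixed-point-free permutation of the rows meeting column $c$, so exactly one $i\neq j$ maps to $j$; summing over such $c$ gives $\sum_{i\neq j}n_{ij}=a_{jj}$, whence $\sum_i a_{ij}=0$.

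There is no substantial obstacle. The only creative step is recognising that, in the additive Latin square $B_p$, column-preservation converts the symbol shift $\delta$ at each cell into a difference of two existing row indices; once this identification is in hand, the linear relation $AX\equiv 0\pmod p$ falls out of the row-sum identity, and the trade matrix axioms together with the vanishing of row and column sums follow from a short double-counting using the bijectivity of the column permutations.
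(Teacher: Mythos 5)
Your proof is correct: the key identification $t'(x_i,c)=x_j+c$ with $j\neq i$ (forced by column preservation in the additive square $B_p$ together with cellwise disjointness), the row-sum congruences giving $AX\equiv 0 \pmod p$, and the double count showing zero row and column sums establish exactly the stated properties; the one point worth making explicit is that the map $i\mapsto j(i,c)$ is a bijection on the rows meeting column $c$ because $T'$ is a partial Latin square (equivalently, by the uniqueness clause in the trade definition), which is what your column-sum count uses. Note the paper does not prove this lemma itself but cites the theory of \cite{Cav1}, where the same construction is carried out with symbols in place of rows; your argument is essentially that argument transposed to rows, so it matches the intended proof.
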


\begin{lemma}
Let $A$ be an $m\times m$ trade matrix 
such that $\det(A)\neq 0$ and   
there exist $m\times 1$ vectors $X$ and $B$ such that $AX=B$, where each entry of $B$ is divisble by $p$ but each entry of $X$ is not divisible by $p$. 
Then $\det(A)$ is divisible by $p$.   
\label{bcc}
\end{lemma}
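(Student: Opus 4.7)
The plan is to reduce the matrix equation $AX = B$ modulo $p$ and invoke the standard fact from linear algebra over a field that a matrix with nontrivial kernel has vanishing determinant. Nothing about the combinatorial structure of $A$ being a trade matrix appears to be needed for this particular lemma; the key inputs are simply the divisibility condition on $B$ and the non-divisibility condition on every entry of $X$.

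More precisely, I would let $\bar{A}$, $\bar{X}$, $\bar{B}$ denote the entrywise reductions modulo $p$, viewed as objects over the field $\mathbb{F}_p$. By hypothesis, every entry of $B$ is divisible by $p$, so $\bar{B}$ is the zero vector in $\mathbb{F}_p^m$, and the reduced equation reads $\bar{A}\bar{X} = 0$. On the other hand, the hypothesis that no entry of $X$ is divisible by $p$ says in particular that $\bar{X} \neq 0$ in $\mathbb{F}_p^m$ (we use only that some coordinate is nonzero mod $p$, though in fact all of them are). Thus $\bar{A}$ has a nontrivial kernel over $\mathbb{F}_p$, hence $\det(\bar{A}) = 0$ in $\mathbb{F}_p$.

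Finally, since determinant commutes with reduction of entries modulo $p$, we have $\overline{\det(A)} = \det(\bar{A}) = 0$, which is precisely the statement $p \mid \det(A)$. The assumption $\det(A) \neq 0$ (as an integer) is compatible with this: it merely ensures that the conclusion is nonvacuous, guaranteeing a nonzero multiple of $p$.

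There is no real obstacle here; the argument is essentially a one-line application of the kernel-determinant correspondence over $\mathbb{F}_p$. The only point worth being careful about is distinguishing between the integer matrix $A$ and its reduction $\bar{A}$, so that the phrase ``$\det(A) = 0$'' is understood in the correct ring at each step.
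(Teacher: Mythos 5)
Your argument is correct: reducing $AX=B$ modulo $p$ gives a nonzero vector in the kernel of $\bar{A}$ over $\mathbb{F}_p$, forcing $\det(\bar{A})=0$ and hence $p\mid\det(A)$, with the hypothesis $\det(A)\neq 0$ serving only to make the conclusion nonvacuous. The paper does not spell out a proof (it cites the theory of \cite{Cav1}), and your one-line reduction-mod-$p$ argument is exactly the standard reasoning intended there, so there is nothing to add.
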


\begin{lemma} 
If $T$ is a Latin trade in $B_p$, then 
 $|T| \geq mp^{1/m}+2$.  
\label{bcc2}
\end{lemma}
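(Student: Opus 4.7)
The plan is to combine the three preceding lemmas in sequence. First, by Lemma \ref{bcc1}, since $T$ occupies $m+1$ non-empty rows $x_1,\ldots,x_{m+1}$, there exists an $(m+1)\times (m+1)$ trade matrix $A$ with $AX=B$, where $a_{ii}$ counts the cells of $T$ in row $x_i$, every entry of $B$ is divisible by $p$, and both the row and column sums of $A$ vanish. In particular $|T|=\sum_{i=1}^{m+1}a_{ii}$, and each $a_{ii}\geq 2$ since every non-empty row of a Latin trade must contain at least two cells (the mate in that row changes the symbol of at least one entry and requires a second entry in the same row).

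Second, I exploit $A\mathbf{1}=0$: replacing $X$ by $X-x_{m+1}\mathbf{1}$ leaves $B$ unchanged, so without loss of generality $x_{m+1}\equiv 0\pmod{p}$, while the remaining row labels $x_1,\ldots,x_m$ are still distinct nonzero residues modulo $p$. Deleting the last row and column of $A$ produces an $m\times m$ principal submatrix $A'$ (which inherits the trade-matrix property from $A$) satisfying $A'X'=B'$, where $X'=(x_1,\ldots,x_m)^T$ has nonzero entries mod $p$ and $B'$ has entries divisible by $p$.

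Third, assuming $\det(A')\neq 0$, Lemma \ref{bcc} forces $p\mid \det(A')$, hence $\det(A')\geq p$ (the value is nonnegative by the $M$-matrix structure, and in any case bounded above by $\prod a_{ii}$ via Lemma \ref{bcc0}). Lemma \ref{bcc0} applied to $A'$ then yields $\prod_{i=1}^m a_{ii}\geq \det(A')\geq p$, and the AM-GM inequality gives $\sum_{i=1}^m a_{ii}\geq m p^{1/m}$. Adding $a_{m+1,m+1}\geq 2$ produces the desired bound $|T|\geq m p^{1/m}+2$.

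The main obstacle is justifying $\det(A')\neq 0$. Since both row and column sums of $A$ vanish, a classical cofactor argument (in the spirit of Kirchhoff's Matrix-Tree theorem) shows that every $m\times m$ principal minor of $A$ takes a common value $c$, computable from any spanning-tree-like expansion of the weighted graph whose edges are the nonzero off-diagonal entries $|a_{ij}|$. If $c\neq 0$ we are done. If $c=0$ then the kernel of $A$ has dimension at least two, which I expect to force the underlying row-intersection graph of $T$ to be disconnected, so that $T$ decomposes into smaller disjoint Latin trades; the bound then follows by induction on the number of rows, applied to the component realizing the largest such $m$, together with a routine check that the resulting inequality dominates $m p^{1/m}+2$.
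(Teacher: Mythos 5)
Your skeleton is the right one (and essentially the argument the paper imports wholesale from \cite{Cav1} -- the paper gives no proof of this lemma beyond the citation): form the $(m+1)\times(m+1)$ trade matrix of Lemma \ref{bcc1}, use the zero row sums to translate the row labels so one of them is $0$, delete that row and column, combine Lemma \ref{bcc} with Lemma \ref{bcc0} to get $p\leq\det(A')\leq\prod a_{ii}$, then AM--GM plus $a_{ii}\geq 2$ for the deleted row. All of that is sound. The genuine gap is exactly where you hedge: $\det(A')\neq 0$. This is the very step the paper singles out as the oversight in \cite{Cav1}, repaired via Theorem \ref{irrr} (irreducible $+$ diagonally dominant $+$ one strictly dominant row) together with a reduction to a suitable irreducible block in the reducible case. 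Your proposed repair has two concrete defects. First, if the common cofactor value vanishes, what is forced to be disconnected is the digraph on the $m+1$ rows whose arcs are the nonzero off-diagonal entries of $A$ (an arc $r_i\to r_j$ records a cell of $T$ in row $r_i$ whose $T'$-symbol lies in row $r_j$), not the row-intersection graph of $T$: two rows can share columns and symbols while lying in different blocks of $A$. So the conclusion ``$T$ decomposes into smaller disjoint Latin trades'' does not follow from what you establish. (It is in fact true for the components of $A$'s digraph -- zero row and column sums force a block-diagonal rather than merely block-triangular structure, and the rows of each component can be shown to carry a sub-bitrade -- but that needs an argument, and your ``I expect'' is doing all the work.)

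Second, even granting a decomposition, the finish is not the routine check you assert. Applying the bound to one block on $s+1\leq m+1$ rows and padding each omitted row by $2$ gives $|T|\geq sp^{1/s}+2(m-s)+2$, and to recover $|T|\geq mp^{1/m}+2$ you must show $mp^{1/m}-sp^{1/s}\leq 2(m-s)$ for all $1\leq s\leq m$. This is true, because the derivative of $u\mapsto up^{1/u}$ equals $p^{1/u}\bigl(1-\tfrac{\ln p}{u}\bigr)\leq 1$, but the naive estimate $p^{1/s}\geq p^{1/m}$ alone fails whenever $p^{1/m}>2$, so the inequality has to be proved, not waved at; also note the induction hypothesis (a statement about Latin trades) can only be invoked once the block is known to be a Latin trade, which is point one again. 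The clean fix is to keep your first three steps and replace the conjectural paragraph either by the paper's route (pass to an irreducible diagonally dominant block that retains a strictly dominant row and apply Theorem \ref{irrr}, then the padding inequality above), or by a matrix-tree/arborescence positivity argument for the cofactor of a connected block, again followed by the padding inequality.
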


We will also need the following corollary from the theory in $\cite{Cav1}$. 
\begin{lemma}
There does not exist a row $i$ of a trade matrix $A$ such that 
$a_{ii}=2$ and $a_{ij}=-2$ where $j\neq i$. 
\label{nointerc}
\end{lemma}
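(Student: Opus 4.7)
The plan is to combine the vanishing row-sum property of the trade matrix (Lemma~\ref{bcc1}) with the congruence $AX\equiv 0 \pmod{p}$, exploiting that $p$ is odd. Suppose for contradiction that $A$ is a trade matrix coming from a Latin trade $T\subseteq B_p$ with $a_{ii}=2$ and $a_{ij}=-2$ for some $j\ne i$. By Lemma~\ref{bcc1} the row sums of $A$ vanish, so $\sum_{k\ne i} a_{ik} = -a_{ii} = -2$. All off-diagonal entries are non-positive, and the single entry $a_{ij}=-2$ already accounts for the full row sum; consequently $a_{ik}=0$ for every $k\notin\{i,j\}$.

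Reading the $i$-th entry of the vector equation $AX = B$ with $B \equiv 0 \pmod{p}$ now gives
\[
2x_i - 2x_j \;\equiv\; 0 \pmod{p},
\]
that is, $2(x_i - x_j)\equiv 0 \pmod{p}$. Since $p$ is odd, $\gcd(2,p) = 1$, hence $x_i \equiv x_j \pmod{p}$. But $x_i$ and $x_j$ are distinct row-labels in $\mathbb{Z}_p$, a contradiction.

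The whole argument reduces to a single row of the defining congruence, so there is essentially no obstacle. The only step meriting attention is the deduction that $a_{ik}=0$ for all $k\notin\{i,j\}$, which uses both the sign convention in the definition of a trade matrix and the exact row-sum identity supplied by Lemma~\ref{bcc1}; one should also note that the contradiction exploits oddness of $p$ in precisely the way one expects, echoing the classical fact that $B_p$ has no intercalate when $p$ is odd.
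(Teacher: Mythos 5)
Your proof is correct and follows essentially the same route as the paper: both arguments isolate row $i$ of the congruence $AX\equiv 0\pmod{p}$, reduce it to $2x_i\equiv 2x_j\pmod{p}$, and invoke the oddness of $p$ together with distinctness of the rows. You simply spell out the step (via the sign condition and the row-sum identity forcing $a_{ik}=0$ for $k\notin\{i,j\}$) that the paper compresses into a citation of Equation (1) of the earlier reference.
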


\begin{proof}
If such a row exists, Equation (1) of \cite{Cav1} becomes $2x_i\equiv 2x_j$ (mod $p$), which implies $x_i=x_j$ since $p$ is odd, a contradiction to the rows being distinct. 
\end{proof}

Those readers who refer back to the detail in paper \cite{Cav1} may notice that the step of proving that a trade matrix has a non-zero determinant is omitted. However Theorem \ref{irrr} in the next section addresses
the original oversight from that paper.

\section{Smallest orthogonal trade}

In this section we give a lower bound on the number of times each symbol occurs in an orthogonal trade (Theorem \ref{lowa}) and an overall lower bound for the size of an orthogonal trade (Theorem \ref{lowa2}).


Suppose that  $k\neq 1$ and symbol $s$ occurs in the rows in the set $R=\{r_1,r_2,\dots ,r_m\}$ of an orthogonal trade $T$ of index $(1,k)$.
Then clearly the set of columns of $T$ which include $s$ is equal to $\{s-r_1,s-r_2,\dots ,s-r_m\}$.
Let $\phi$ be the devolution on $R$ such that $s$ occurs in
  the set of cells
$$\{(r_i,s-\phi(r_i))\mid r_i\in R\}$$
in $T'$.
Observe that $(k-1)r_i+s$ occurs in cell $(r_i,s-r_i)$ of $B_p(k)$.
Thus, considering orthogonality, the set of orthogonal ordered pairs $\{(s,(k-1)r_i+s)\mid r_i\in R\}$ must be covered after $T$ is replaced by $T'$;
it follows that
\begin{eqnarray}
\{(k-1)r_i+s\mid r_i\in R\} & = & \{kr_i+s-\phi(r_i)\mid r_i\in R\}.
\label{roro}
\end{eqnarray}
We thus may define another permutation $\phi'$ on $R$ such that
$\phi'(r_i)=(kr_i-\phi(r_i))/(k-1)$ for each $r_i\in R$.
If $\phi'(r_i)=r_i$ for some $r_i\in R$, $\phi$ is not a devolution, a contradiction.
Similarly, $\phi'(r_i)\neq \phi(r_i)$ for each $r_i\in R$.
We thus obtain a linear system
of the form $A{\bf u}={\bf 0}$ (mod $p$),
where ${\bf u}=(r_1,r_2,\dots ,r_{m})^T$ and
$A$ is a square matrix of dimensions $m\times m$ with the following properties:
\begin{enumerate}
\item[{\rm (P1)}] Each entry of the main diagonal of $A$ is $k$.
\item[{\rm (P2)}] Each off-diagonal entry of $A$ is either $0$, $-1$ or $1-k$.
\item[{\rm (P3)}] The sum of each row and column of $A$ is $0$.
\end{enumerate}

In the example in Figure \ref{figg1} with $s=0$, we have $R=\{0,4,5\}$,  $\phi=(045)$, $\phi'=(054)$,
${\bf u}=(0,4,5)^T$ and 
$$A=\left[\begin{array}{ccc}
3 & -1 & -2 \\
-2 & 3 & -1 \\
-1 & -2 & 3 
\end{array}\right].$$

The following lemma is immediate. 
\begin{lemma}
Any symbol in an orthogonal trade occurs at least $3$ times. 
\label{th3}
\end{lemma}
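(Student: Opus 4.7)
The plan is to apply the $m\times m$ matrix $A$ built just before the lemma, where $m$ is the number of rows in which symbol $s$ appears, and rule out $m=1$ and $m=2$ directly from properties (P1)--(P3). Throughout I would rely on the fact that the index $(1,k)$ of any orthogonal trade must satisfy $k\neq 0$ (otherwise $B_p(k)$ is not a Latin square) and $k\neq 1$ (otherwise $B_p(1)$ and $B_p(k)$ are not two distinct orthogonal Latin squares), so $k\in\{2,3,\dots,p-1\}$.

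For $m=1$, the matrix $A$ reduces to the $1\times 1$ matrix $[k]$, whose unique row sum is $k$. Property (P3) forces this sum to vanish, contradicting $k\neq 0$. Hence $m\neq 1$.

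For $m=2$, I would write
\[
A=\begin{pmatrix} k & a \\ b & k \end{pmatrix},
\]
so (P3) immediately gives $a=b=-k$. But (P2) constrains each off-diagonal entry to lie in $\{0,-1,1-k\}$. Equating $-k$ with each option in turn yields $k=0$, $k=1$, or the impossibility $0=1$; all three are excluded by the standing range of $k$. So $m\neq 2$, and combining the two cases gives $m\geq 3$.

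There is essentially no obstacle: the argument is a short case analysis on $2\times 2$ matrices of the shape permitted by (P1)--(P3). The only point worth stating carefully is why $k=0$ and $k=1$ are inadmissible, since both cases would otherwise make the argument vacuous; once this is in place, the forcing $a=b=-k$ immediately clashes with the very limited palette $\{0,-1,1-k\}$ allowed for off-diagonal entries.
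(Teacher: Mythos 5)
Your proof is correct and is essentially the argument the paper has in mind: the paper labels the lemma ``immediate'' from the setup preceding it, since $\phi$ and $\phi'$ are fixed-point-free permutations of $R$ that also disagree with each other everywhere, which is impossible on fewer than $3$ rows. Your $m=1$, $m=2$ case check on the matrix $A$ via (P1)--(P3) is just a repackaging of that same observation, with the exclusion of $k=0,1$ correctly supplied by the standing assumptions $2\leq k\leq p-1$.
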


Next, property (P3) above implies that $\det(A)=0$.
From Lemma \ref{iceice}, we may assume without loss of generality that $r_1=0$. 
Let $A'$ be the $(m-1)\times (m-1)$ matrix
obtained by deleting the first row and column of $A$
and let ${\bf u}'=(r_2,\dots ,r_{m})^T$.
Then $A'{\bf u}'={\bf 0}$, where $A'$ satisfies (P1), (P2) and the following properties:
\begin{enumerate}
\item[{\rm (P4)}] The sum of each row of $A$ is $0$ except for at least two rows which have a positive sum.
\item[{\rm (P5)}] The sum of each column of $A$ is $0$ except for at least two columns which have a positive sum.
\end{enumerate}

An $m\times m$ matrix $A=(a_{ij})$ is said to be {\em diagonally dominant} if
$$2|a_{ii}|\geq \sum_{j=1}^m |a_{ij}|$$
for each $i\in [m]$. Clearly $A'$ above is diagonally dominant.

\begin{theorem} {\rm (\cite{HoJo, Taus})}
If $A$ is diagonally dominant and irreducible and there is an integer
$k\in [m]$ such that
\begin{eqnarray}
2|a_{kk}| & > & \sum_{j=1}^m |a_{kj}|,
\label{eqqq}
\end{eqnarray}
then $A$ is non-singular.
\label{irrr}
\end{theorem}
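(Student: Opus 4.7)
The plan is to prove this classical fact (the L\'evy--Desplanques/Taussky theorem) by the standard contradiction argument. First I would rewrite the paper's form of diagonal dominance in the more familiar shape: since $2|a_{ii}|\geq \sum_{j=1}^m|a_{ij}| = |a_{ii}|+\sum_{j\neq i}|a_{ij}|$, the hypothesis is equivalent to $|a_{ii}|\geq \sum_{j\neq i}|a_{ij}|$ for every $i$, with strict inequality at $i=k$.

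Suppose for contradiction that $A$ is singular. Then there exists a nonzero vector $x$ with $Ax=0$, which I rescale so that $\max_i|x_i|=1$, and let $I=\{i : |x_i|=1\}$, which is nonempty. For each $i\in I$, rewriting the $i$-th equation as $a_{ii}x_i = -\sum_{j\neq i}a_{ij}x_j$ and applying the triangle inequality gives
$$|a_{ii}| \;=\; |a_{ii}x_i| \;\leq\; \sum_{j\neq i}|a_{ij}|\,|x_j| \;\leq\; \sum_{j\neq i}|a_{ij}|.$$
Diagonal dominance forces equality throughout both inequalities, so every $j\neq i$ with $a_{ij}\neq 0$ must satisfy $|x_j|=1$; equivalently, $I$ is closed under the out-neighbour relation defined by $i\to j$ whenever $a_{ij}\neq 0$.

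Next I would invoke irreducibility in its graph-theoretic form: the directed graph on $[m]$ with an arc $i\to j$ whenever $i\neq j$ and $a_{ij}\neq 0$ is strongly connected. Fixing any $i_0\in I$ and iterating the previous paragraph along directed paths emanating from $i_0$ shows $I=[m]$. Applying the chain of inequalities above at the distinguished row $k$ then yields $|a_{kk}|\leq \sum_{j\neq k}|a_{kj}|$, contradicting the strict bound \eqref{eqqq}.

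The argument is essentially routine. The only points requiring care are (a) translating between the two equivalent formulations of diagonal dominance, and (b) being precise about which form of irreducibility is in force, so that the one-step propagation $i\in I \Rightarrow j\in I$ along each arc $i\to j$ can be extended, via strong connectivity, to every vertex of $[m]$.
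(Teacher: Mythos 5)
Your proof is correct, and there is nothing in the paper to compare it against: the authors state this result with only the citations to Taussky and Horn--Johnson and give no proof of their own. Your argument (normalise a null vector, use the equality case of the triangle inequality to show the index set of maximal-modulus entries is closed under the arcs $i\to j$ with $a_{ij}\neq 0$, invoke strong connectivity from irreducibility to get all of $[m]$, then contradict the strict inequality in row $k$) is exactly the classical proof found in those references, including the correct translation of the paper's condition $2|a_{ii}|\geq \sum_{j=1}^m |a_{ij}|$ into $|a_{ii}|\geq \sum_{j\neq i}|a_{ij}|$.
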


Thus if $A'$ is irreducible, we have from the previous theorem,
det$(A')\neq 0$.
However the case when $A'$ is reducible can be dealt with in the following
lemma, which is easy to prove.
\begin{lemma}
Let $A'$ be a diagonally dominant matrix satisfying
{\rm (P1)}, {\rm (P2)}, {\rm (P4)} and {\rm (P5)} above.
Then there exists an irreducible, diagonally dominant $m'\times m'$ matrix $A''$
with $m'\leq m$ satisfying
{\rm (P1)}, {\rm (P2)} and Equation $(\ref{eqqq})$ above.
\end{lemma}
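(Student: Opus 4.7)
My plan is to exhibit $A''$ as a suitable principal submatrix of $A'$, handling two cases. If $A'$ is already irreducible, I would take $A''=A'$: then (P1), (P2), and diagonal dominance transfer trivially, and I only need to verify Equation~(\ref{eqqq}). Since the paper assumes $k\neq 1$, i.e.\ $k\geq 2$, every off-diagonal entry prescribed by (P2) (namely $0$, $-1$, or $1-k$) is non-positive. By (P4) some row $i$ of $A'$ satisfies $\sum_j a'_{ij}>0$, which together with non-positivity of the off-diagonal entries rearranges to $\sum_{j\neq i}|a'_{ij}|<k=|a'_{ii}|$, so (\ref{eqqq}) holds at index $i$.

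When $A'$ is reducible, I would apply the standard block decomposition: there is a simultaneous permutation of rows and columns placing $A'$ in block upper triangular form whose diagonal blocks $B_1,\ldots,B_t$ are each irreducible. Every such block is a principal submatrix $B_s=A'[I_s,I_s]$, inheriting (P1) and (P2) automatically; diagonal dominance descends to each $B_s$ since passing to a principal submatrix only shrinks the off-diagonal absolute-value sum while keeping $|a'_{ii}|$ fixed.

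The key bookkeeping is on row sums: for $i\in I_s$,
\[
\sum_{j\in I_s}a'_{ij} \;=\; \sum_j a'_{ij} \;-\; \sum_{j\notin I_s}a'_{ij},
\]
and by non-positivity of the off-diagonal entries the subtracted sum is $\leq 0$, so the row sum of $B_s$ at row $i$ dominates the row sum of $A'$ at the same row. Using (P4), I would pick any row of $A'$ with strictly positive row sum and let $A''$ be the (unique) irreducible block $B_s$ containing it; the irreducible-case argument applied inside $A''$ at that row then delivers Equation~(\ref{eqqq}). The bound $m'\leq m$ is automatic since $A''$ is a principal submatrix of $A'$.

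The main subtlety is the monotonicity of row sums under restriction to a diagonal block, which hinges decisively on the sign information in (P2); property (P5) is the column-analogue of (P4) and is not directly needed here, but is recorded to reflect the symmetric structure inherited from the larger matrix $A$.
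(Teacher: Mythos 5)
Your argument does prove the lemma exactly as it is worded, and the individual steps are sound: since $k\geq 2$, every off-diagonal entry permitted by (P2) is non-positive, so any row of positive sum (which (P4) supplies) is a row where $2|a_{ii}|>\sum_j |a_{ij}|$; properties (P1), (P2) and weak diagonal dominance pass to every principal submatrix; and restricting to a diagonal block of the Frobenius normal form can only increase row sums, so the irreducible block containing a positive-sum row satisfies Equation (\ref{eqqq}). The paper gives no proof (it calls the lemma easy), and extracting an irreducible diagonal block is surely the intended mechanism, so as a proof of the literal statement this is fine.

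The caveat is substantive, though. The lemma is immediately used by feeding $A''$ into Lemma \ref{bcc}, which needs a system $A''X=B$ with every entry of $B$ divisible by $p$ and no entry of $X$ divisible by $p$; that is, $A''$ must inherit the congruence $A''\mathbf{u}''\equiv\mathbf{0}\pmod{p}$ from $A'\mathbf{u}'\equiv\mathbf{0}\pmod{p}$. A diagonal block inherits this only when its rows have no nonzero entries of $A'$ outside the block (a final, or ``sink'', block of the triangular form); for the block you choose --- the one containing a positive-sum row --- the restricted equations pick up cross terms from later blocks and need not vanish modulo $p$, so the conclusion $p\mid\det(A'')$ that the paper draws next is not available for your $A''$. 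Conversely, for a sink block Equation (\ref{eqqq}) is not automatic from (P4), since all of its rows may have zero sum; there one has to use the column information --- this is exactly where (P5), which you set aside as unnecessary, and the column (transpose) form of Theorem \ref{irrr} enter --- and the completely closed case (all row and column sums of the block zero) needs a separate further reduction. So while the bare statement is formally established, your choice of $A''$ would not support the step the lemma exists for, and the dismissal of (P5) is where the missing idea hides.
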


Thus there exists an $m'\times m'$  matrix $A''$, satisfying
{\rm (P1)}, {\rm (P2)} and Equation $(\ref{eqqq})$ above,
with non-zero determinant, where $m'\leq m$.
Moreover, $A''$ is a type of {\em trade matrix} as defined in
the previous section.
From Lemma \ref{bcc0},  
 the determinant of $A''$ is bounded above by $k^{m-1}$.
Thus from Lemma \ref{bcc}, $p< k^{m-1}$ and we have shown the following.
\begin{theorem}
Let $K=${\rm\ min}$\{k,k^{-1}\}$.
The number of times each symbol occurs in an orthogonal trade is greater than $\log_K{p+1}$.
\label{lowa}
\end{theorem}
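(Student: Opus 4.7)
The plan is to convert the linear congruence $A\mathbf{u}\equiv \mathbf{0}\pmod{p}$ derived above, where $\mathbf{u}=(r_1,\dots,r_m)^T$ lists the rows in which symbol $s$ occurs and $A$ satisfies (P1)--(P3), into a lower bound on $m$. The idea is to exhibit a non-singular submatrix of $A$ whose determinant is simultaneously a positive multiple of $p$ (via Lemma \ref{bcc}) and bounded above by a power of $k$ (via Lemma \ref{bcc0}). Since (P3) forces $\det(A)=0$, the first task is to pass to a submatrix before either lemma can be applied.

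First I would use Lemma \ref{iceice} to take $r_1=0$ and delete the first row and column of $A$, obtaining an $(m-1)\times(m-1)$ matrix $A'$ satisfying $A'\mathbf{u}'\equiv\mathbf{0}\pmod{p}$ with $\mathbf{u}'=(r_2,\dots,r_m)^T$. The entries of $\mathbf{u}'$ are non-zero modulo $p$ because the $r_i$ are distinct residues in $\mathbb{Z}_p$ and we have excised the single zero entry. Properties (P1) and (P2) survive the deletion, and (P4), (P5) guarantee that $A'$ is diagonally dominant with strict dominance in at least two rows and two columns.

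Next I would invoke the reduction lemma immediately preceding the theorem to pass from $A'$ to an irreducible diagonally dominant submatrix $A''$ of size $m'\leq m-1$ still satisfying (P1), (P2), and now strictly satisfying $(\ref{eqqq})$ in at least one row. Theorem \ref{irrr} then certifies that $\det(A'')\neq 0$. Since the associated subvector $\mathbf{u}''$ inherits non-divisibility by $p$ from $\mathbf{u}'$, Lemma \ref{bcc} forces $p\mid \det(A'')$, so $\det(A'')\geq p$.

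Finally, Lemma \ref{bcc0} gives $\det(A'')\leq \prod_{i=1}^{m'} a''_{ii}=k^{m'}\leq k^{m-1}$, so that $p\leq k^{m-1}$; this rearranges to the claimed logarithmic bound on $m$, and the replacement of $k$ by $K=\min\{k,k^{-1}\}$ is legitimate because Lemma \ref{iceice} allowed us to choose the smaller representative of $\{k,k^{-1}\}$ at the outset. The step I expect to be the main obstacle is verifying that the reduction lemma simultaneously preserves everything we need: non-divisibility of the restricted vector $\mathbf{u}''$, the irreducibility, the strict diagonal dominance required by Theorem \ref{irrr}, and the trade-matrix shape (P1), (P2). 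Once this bookkeeping is handled, the inequality follows immediately from the chain above.
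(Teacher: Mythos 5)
Your proposal is correct and follows essentially the same route as the paper: delete the first row and column after normalising $r_1=0$, reduce to an irreducible diagonally dominant submatrix $A''$, apply Theorem \ref{irrr} together with Lemmas \ref{bcc} and \ref{bcc0} to get $p\leq k^{m-1}$, and pass from $k$ to $K$ via Lemma \ref{iceice}. The bookkeeping concerns you flag (preservation of (P1), (P2), strict dominance, and the non-divisibility of the restricted vector) are exactly what the paper's preceding reduction lemma is asserted to handle.
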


 We next find a lower bound on the size of $T$.
\begin{theorem}
If $T$ is an orthogonal trade of index $(1,k)$,
then
$$|T|> \frac{\log{p}\log_K{p}}{\log\log_K{p}}.$$
where $K=${\rm\ min}$\{k,k^{-1}\}$.
\label{lowa2}
\end{theorem}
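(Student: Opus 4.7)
The plan is to combine Theorem~\ref{lowa}, which lower-bounds how often each symbol appears in $T$, with Lemma~\ref{bcc2}, which lower-bounds $|T|$ via its number of non-empty rows. Since the paper has already observed that statements about rows, columns and symbols in $B_p$ are interchangeable under the standard equivalences, I would first invoke the ``symbol analogue'' of Lemma~\ref{bcc2}: if $T$ uses exactly $m+1$ distinct symbols, then $|T| \geq m p^{1/m} + 2$.

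Write $s = \log_K p$. Theorem~\ref{lowa} guarantees that each symbol occurring in $T$ does so strictly more than $s$ times, so summing over the $m+1$ distinct symbols yields $|T| > s(m+1)$. Thus $|T|$ is bounded below by the maximum of the two quantities $s(m+1)$ and $mp^{1/m}$. The whole argument then reduces to an elementary optimisation of this maximum over $m$.

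Set $f(m) = mp^{1/m}$. A direct differentiation (using any fixed log base, say the natural one) shows that $f$ is strictly decreasing on $(0,\log p)$ and strictly increasing on $(\log p,\infty)$. The natural ``balance point'' is $m^{*} = \log p / \log s$, where a short computation, using $p^{\log s/\log p} = s$, gives
\[
f(m^{*}) \;=\; \frac{\log p}{\log s}\cdot s \;=\; \frac{s\,\log p}{\log s}.
\]
I would then split into two cases. If $m \leq m^{*}$ then, assuming $s > e$ so that $m^{*} < \log p$, we are in the decreasing regime of $f$, and hence $|T| \geq f(m) \geq f(m^{*})$. If $m > m^{*}$, then instead $|T| > s(m+1) > s\,m^{*} = f(m^{*})$. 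In either case $|T| > s\log p/\log s$, which is the stated bound.

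The main obstacle is not the estimate itself but the book-keeping: aligning the log bases in the statement (the theorem uses ``$\log$'' without specifying a base alongside $\log_K$, so one must pick a single base throughout and verify that the identity $p^{\log s/\log p}=s$ makes $f(m^{*})$ simplify as claimed), checking that the ``symbol'' version of Lemma~\ref{bcc2} is indeed legitimate via the equivalences of $B_p$ already invoked earlier, and noting the mild side condition $s > e$ (equivalently $p > K^{e}$) required for $m^{*}$ to lie in the decreasing regime of $f$. For the handful of very small primes where $s \leq e$ the asserted bound is essentially vacuous, so no separate treatment is required.
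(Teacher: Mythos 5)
Your core derivation is sound and is essentially the paper's own argument in different clothing: the paper likewise applies the symbol form of Lemma~\ref{bcc2} together with Theorem~\ref{lowa}, but packages the optimisation via the average multiplicity $x=|T|/m$, deducing $m>\log p/\log x$, hence $|T|>(x/\log x)\log p$, and then invoking the monotonicity of $x/\log x$ for $x>e$ (with Lemma~\ref{th3} guaranteeing $x\geq 3>e$). Your two-case comparison of $s(m+1)$ with $f(m)=mp^{1/m}$ at the balance point $m^{*}=\log p/\log s$ is the same computation made explicit, and the details (the identity $p^{\log s/\log p}=s$, the turning point of $f$ at $\log p$, strictness via the ``$+2$'' in Lemma~\ref{bcc2}) are handled correctly.

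The one genuine flaw is your closing claim that the side condition $s=\log_K p>e$ fails only for ``a handful of very small primes'' and that the bound is ``essentially vacuous'' when it fails. The condition depends on $k$, not just on $p$: one has $s\leq e$ whenever $K\geq p^{1/e}$, which happens for many choices of $k$ for \emph{every} prime (for instance $k=(p-1)/2$, whose inverse is $p-2$, gives $K=(p-1)/2$ and $s$ close to $1$). In that regime the stated bound is anything but vacuous --- since $s/\log s\to\infty$ as $s\to 1^{+}$, it is in fact strongest there, of order $(\log p)^2$ --- and your case analysis does not cover it: for $s<e$ the first case breaks down ($m^{*}>\log p$), and one can have $x/\log x<s/\log s$ (e.g.\ symbols occurring $3$ times each while $s=1.1$), so no cheap monotonicity rescue is available. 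To be fair, the paper's proof has exactly the same restriction hidden in its appeal to ``$x/\log x$ increasing for $x>e$'', so your argument is no weaker than the published one; but your stated reason for skipping the case $s\leq e$ is incorrect and should be replaced either by an explicit hypothesis such as $K<p^{1/e}$ or by a separate argument for large $K$.
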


\begin{proof}
Let $T$ contain $m$ distinct symbols and let 
$s_i$ be the the number of times symbol $i$ occurs in $T$, where $1\leq i\leq m$. 
From Lemma \ref{bcc2},  
for any Latin trade in $B_p$, $\sum_{i=1}^{m} s_i=|T|> mp^{1/m}$.
Let $x=|T|/m=
(\sum_{i=1}^{m} s_i)/m$. From Lemma \ref{th3}, $x\geq 3$. 
Also, from above, $x> p^{1/m}$ which implies that $m> (\log{p})/(\log{x})$.
Thus $|T|> (x/\log{x})\log{p}$. But the function $x/\log{x}$ is strictly increasing for $x>e$; 
thus the result follows from the previous theorem.
\end{proof}

\section{Orthogonal trades permuting entire rows} \label{section:entirerows}

In this section we
investigate the case when $T$ and
$T'$ are constructed taking complete rows of $B_p$ and permuting them.
It turns out that such orthogonal trades arise from considering a symbol from an arbitrary orthogonal trade.

\begin{theorem} Let $T$ be an orthogonal trade in $B_p$.
Let $R$ be the set of rows that contain a particular symbol $s$ in $T$.
Then there exists an orthogonal trade of size $p|R|$ constructed by permuting the rows of $R$.
\label{rara}
\end{theorem}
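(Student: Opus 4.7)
The plan is to reuse the derangement $\phi$ on $R$ that was constructed in Section~3 as the permutation governing the occurrences of symbol $s$ in the disjoint mate $T'$, and to use it to permute the full rows of $B_p$ indexed by $R$. Concretely, I will set
$$T^* = \{(r,j,r+j) : r \in R,\ j \in \mathbb{Z}_p\}, \qquad T'^* = \{(r,j,\phi(r)+j) : r \in R,\ j \in \mathbb{Z}_p\},$$
and verify three things in turn: (i) $(T^*, T'^*)$ is a Latin trade in $B_p$ of size $p|R|$; (ii) $L^* := (B_p \setminus T^*) \cup T'^*$ is a Latin square; and (iii) $L^*$ is orthogonal to $B_p(k)$.

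For (i), each row $r \in R$ of $T^*$ is shifted in $T'^*$ by the constant $\phi(r) - r$, which is non-zero because $\phi$ is a derangement of $R$. The three trade conditions are routine: for $(r,j,r+j) \in T^*$, its row-mate in $T'^*$ lies in row $\phi^{-1}(r) \neq r$, its column-mate in row $r$ lies at column $j + r - \phi(r) \neq j$, and the symbol in cell $(r,j)$ of $T'^*$ is $\phi(r) + j \neq r + j$; the reverse direction is symmetric. For (ii), each row $r \in R$ of $L^*$ contains $\{\phi(r)+j : j \in \mathbb{Z}_p\} = \mathbb{Z}_p$, and in each column $j$ the entries $\{\phi(r)+j : r \in R\} \cup \{r+j : r \notin R\}$ equal $\mathbb{Z}_p$ because $\phi$ permutes $R$.

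The real content is (iii). Outside rows $R$, the pairs $(L^*_{r,j}, B_p(k)_{r,j})$ agree with those of $B_p$. For $r \in R$, the new pair at $(r,j)$ is $(\phi(r)+j,\ kr+j)$, whose difference of coordinates is $kr - \phi(r)$, whereas the original pair had difference $(k-1)r$. Grouping orthogonality pairs by their fixed difference, the contribution from rows of $R$ is $\bigsqcup_{r\in R}\{(a, a + c_r) : a \in \mathbb{Z}_p\}$, with $c_r = kr - \phi(r)$ for $L^*$ and $c_r = (k-1)r$ for $B_p$. These contributions coincide as multisets precisely when $\{kr - \phi(r) : r \in R\} = \{(k-1)r : r \in R\}$, and this is exactly the $(-s)$-translate of equation~(\ref{roro}) used to define $\phi$ from $T$ in the first place. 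Hence $L^*$ and $B_p(k)$ produce the same orthogonality multiset as $B_p$ and $B_p(k)$, and orthogonality is preserved.

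The main (and really only) obstacle is the observation that equation~(\ref{roro}), originally derived as a purely \emph{local} constraint on one symbol $s$, already encodes exactly the \emph{global} multiset equality of coordinate-differences needed once we inflate the setup to permuting entire rows; the three verification steps above are each short and direct.
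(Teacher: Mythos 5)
Your proof is correct and follows essentially the same route as the paper: the paper's argument is exactly to translate Equation~(\ref{roro}) by an arbitrary $j\in\mathbb{Z}_p$ (equivalently, your multiset-of-differences identity $\{kr-\phi(r): r\in R\}=\{(k-1)r: r\in R\}$) and conclude that replacing row $r$ by row $\phi(r)$ for $r\in R$ preserves orthogonality with $B_p(k)$. Your write-up simply makes explicit the routine checks (Latin trade, Latin square, and the grouping of orthogonality pairs by difference class) that the paper leaves implicit.
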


\begin{proof} Fix $s\in {\Bbb Z}_p$.
Equation \ref{roro} implies that
$$\{((k-1)r_i+s+j)\mid r_i\in R,j\in {\mathbb Z}_p\} = \{(kr_i+s-\phi(r_i)+j)\mid
r_i\in R,j\in {\mathbb Z}_p\}.$$
Thus if we replace row $r_i$ with row $\phi(r_i)$ for each $r_i\in R$ we obtain an orthogonal trade.
\end{proof}

\begin{figure}
$$
\begin{array}{|c|c|c|c|c|c|c|}
\hline
0_4 & 1_5 & 2_6 & 3_0 & 4_1 & 5_2 & 6_3 \\
\hline
1 & 2 & 3 & 4 & 5 & 6 & 0 \\
\hline
2 & 3 & 4 & 5 & 6 & 0 & 1 \\
\hline
3 & 4 & 5 & 6 & 0 & 1 & 2 \\
\hline
4_5 & 5_6 & 6_0 & 0_1 & 1_2 & 2_3 & 3_4 \\
\hline
5_0 & 6_1 & 0_2 & 1_3 & 2_4 & 3_5 & 4_6 \\
\hline
6 & 0 & 1 & 2 & 3 & 4 & 5 \\
\hline
\end{array}
\quad
\begin{array}{|c|c|c|c|c|c|c|}
\hline
0 & 1 & 2 & 3 & 4 & 5 & 6 \\
\hline
3 & 4 & 5 & 6 & 0 & 1 & 2 \\
\hline
6 & 0 & 1 & 2 & 3 & 4 & 5 \\
\hline
2 & 3 & 4 & 5 & 6 & 0 & 1 \\
\hline
5 & 6 & 0 & 1 & 2 & 3 & 4 \\
\hline
1 & 2 & 3 & 4 & 5 & 6 & 0 \\
\hline
4 & 5 & 6 & 0 & 1 & 2 & 3 \\
\hline
\end{array}
$$
\caption{An orthogonal trade derived from Figure $\ref{figg1}$ as in Theorem $\ref{rara}$.}
\end{figure}

In fact, the existence of an orthogonal trade permuting entire rows
is equivalent to the existence of any matrix $A$ satisfying properties from Section 2.
\begin{corollary}
Let $A$ be an $m\times m$ matrix satisfying properties {\rm (P1)}, {\rm (P2)} and {\rm (P3)} from Section $2$.
Suppose furthermore there is a solution to
 $A{\bf u}={\bf 0}$ where ${\bf u}=(r_1,r_2,\dots ,r_m)^T$ and $r_1,r_2,\dots ,r_m$ are distinct residues in ${\mathbb Z}_p$.
Then there exists an orthogonal trade $T$ of index $(1,k)$ whose disjoint mate $T'$ is formed by permuting the rows $r_1,r_2,\dots ,r_m$ of $T$.
\label{striker}
\end{corollary}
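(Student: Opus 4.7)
The plan is to reverse the derivation preceding Theorem~\ref{rara}: from $A$ and $\mathbf{u}$, extract a pair of fixed-point-free permutations $\phi, \phi'$ of $R = \{r_1, \ldots, r_m\}$ satisfying Equation~(\ref{roro}), and then invoke Theorem~\ref{rara} essentially verbatim.

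First I would read $\phi, \phi'$ off $A$ by setting $\phi(r_i) = r_j$ at the column $j$ where $a_{ij} = -1$, and $\phi'(r_i) = r_{j'}$ at the column $j'$ where $a_{ij'} = 1-k$. An integer-arithmetic argument using (P1), (P2), (P3) is intended to force each row of $A$ to contain exactly one $-1$ and exactly one $1-k$ off-diagonal entry, making these maps well-defined; the symmetric analysis of columns then promotes $\phi, \phi'$ to bijections of $R$. Because the diagonal entry $k$ is neither $-1$ nor $1-k$ (here using $k \neq 1$), both maps are fixed-point-free; for $k \neq 2$ the distinction between $-1$ and $1-k$ gives $\phi(r_i) \neq \phi'(r_i)$, while for $k = 2$ the two values coincide and one instead chooses $\phi, \phi'$ via a $2$-factorisation of the $-1$-incidence pattern into two disjoint perfect matchings.

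Next I would translate $A\mathbf{u} \equiv 0 \pmod p$ row-by-row: row $i$ reads $k r_i - \phi(r_i) - (k-1) \phi'(r_i) \equiv 0 \pmod p$, equivalently $(k-1) \phi'(r_i) \equiv k r_i - \phi(r_i) \pmod p$. Since $\phi'$ permutes $R$, for any symbol $s \in \mathbb{Z}_p$,
$$\{(k-1) r_i + s : r_i \in R\} = \{(k-1) \phi'(r_i) + s : r_i \in R\} = \{k r_i + s - \phi(r_i) : r_i \in R\},$$
which is exactly Equation~(\ref{roro}). By the construction in the proof of Theorem~\ref{rara}, replacing row $r_i$ of $B_p$ by row $\phi(r_i)$ for every $r_i \in R$ therefore produces an orthogonal trade $T$ of index $(1,k)$ whose disjoint mate $T'$ is precisely this row-permuted configuration.

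The main obstacle is the row decomposition step: the integer equation $a + (k-1) b = k$ with $a, b \geq 0$ admits the spurious solution $(a,b) = (k,0)$ in addition to the expected $(1,1)$ whenever $k \geq 3$, so (P1)--(P3) alone do not obviously force the clean ``one $-1$ and one $1-k$'' structure per row. Excluding the degenerate case requires either a tightening of the hypothesis, or a global argument combining the row and column constraints of (P3) with the standing assumption that the entries of $\mathbf{u}$ are distinct residues satisfying $A\mathbf{u} \equiv 0 \pmod p$.
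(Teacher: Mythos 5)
Your argument is essentially the paper's own proof: the paper defines $\phi(r_i)=r_j$ exactly when $A_{ij}=-1$ and $\phi'(r_i)=r_j$ exactly when $A_{ij}=-(k-1)$, observes that these are disjoint devolutions satisfying $(k-1)\phi'(r_i)=kr_i-\phi(r_i)$, deduces Equation~(\ref{roro}), and invokes Theorem~\ref{rara} --- which is precisely your first two paragraphs. The ``obstacle'' you flag in your final paragraph (a row whose sum vanishes via $k$ off-diagonal entries $-1$ rather than one $-1$ and one $-(k-1)$, and the coincidence $-1=1-k$ when $k=2$) is a genuine imprecision, but the paper does not address it either: its proof tacitly assumes each row and column contains exactly one $-1$ and one $-(k-1)$, the form in which $A$ actually arises in Section 3, so your extra care (including the matching decomposition for $k=2$) identifies a looseness in the corollary's hypotheses rather than a defect specific to your argument.
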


\begin{proof}
Define $\phi(r_i)=r_j$ if and only if $A_{ij}=-1$ and
define $\phi'(r_i)=r_j$ if and only if $A_{ij}=-(k-1)$.
Then $\phi$ and $\phi'$ are disjoint devolutions on the set $\{r_1,r_2,\dots ,r_m\}$ and $\phi'(r_i)(k-1)=kr_i-\phi(r_i)$ for each $i$, $1\leq i\leq m$.
In turn, Equation \ref{roro} is satisfied. The proof then follows by Theorem \ref{rara}.
\end{proof}

From the previous theorem and Theorem \ref{lowa}, we have the following.

\begin{corollary}
Let $T$ be an orthogonal trade of index $(1,k)$ with disjoint mate $T'$ such that $T'$ is the permutation of $m$ entire rows of $T$.
Then $m\geq \log_K{p+1}$,
where  $K=${\rm\ min}$\{k,k^{-1}\}$.
\label{permm}
\end{corollary}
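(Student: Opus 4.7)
The plan is to derive the corollary as a one-line consequence of Theorem \ref{lowa}, after recording an elementary symbol-count identity. First I would observe that the hypothesis---$T'$ is obtained by permuting $m$ entire rows of $T$---forces $T$ itself to consist of exactly those $m$ full rows of $B_p$, viewed as subsets of $\mathbb{Z}_p\times\mathbb{Z}_p\times\mathbb{Z}_p$. Because each row of the Latin square $B_p$ contains every element of $\mathbb{Z}_p$ exactly once, every symbol $s\in\mathbb{Z}_p$ then appears in $T$ exactly $m$ times, once in each of the $m$ participating rows. In particular the number of occurrences of any fixed symbol in $T$ equals $m$.

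Now I would invoke Theorem \ref{lowa}, which asserts that in any orthogonal trade of index $(1,k)$ the number of occurrences of any symbol strictly exceeds $\log_K{p+1}$, where $K=\min\{k,k^{-1}\}$. Applied to our situation, where that count is precisely $m$ for \emph{every} symbol, this immediately yields $m>\log_K{p+1}$ and hence $m\geq\log_K{p+1}$, which is the desired conclusion.

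There is no substantive obstacle here: Theorem \ref{lowa} has already done the real work, and the corollary just extracts its content in the special case of a row-permuting trade. The only point to verify is the uniform symbol-count identity, which is immediate from the Latin-square property of $B_p$; once this is noted, the inequality is automatic. (As a side observation, the same reasoning gives $|T|=mp\geq p(\log_K{p+1})$, sharpening the general lower bound of Theorem \ref{lowa2} in this restricted setting, though this is not required for the statement.)
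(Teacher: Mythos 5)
Your proof is correct and follows essentially the same route as the paper: the paper derives Corollary \ref{permm} directly from Theorem \ref{lowa} together with the observation that a trade whose disjoint mate permutes $m$ entire rows consists of $m$ full rows of $B_p$, so every symbol occurs exactly $m$ times, and the symbol-count bound of Theorem \ref{lowa} then gives $m\geq \log_K{p}+1$. Your argument is exactly this, so no further comparison is needed.
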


\begin{theorem}
There exists an orthogonal trade $T$ with disjoint mate $T'$ such that $T'$ is the permutation of $3$ entire rows of $T$
 if and only if $p\equiv 1\mod{6}$.
\end{theorem}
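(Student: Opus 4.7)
The plan is to invoke Corollary \ref{striker} to reduce the statement to an arithmetic question: such an orthogonal trade exists if and only if there is a $3\times 3$ matrix $A$ satisfying (P1)--(P3) of Section 2 together with a vector $\mathbf{u}=(r_1,r_2,r_3)^T$ of three distinct residues in $\mathbb{Z}_p$ solving $A\mathbf{u}\equiv\mathbf{0}\pmod p$. By Lemma \ref{iceice} I may normalize $r_1=0$.

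First I would pin down the shape of $A$. In each row the diagonal entry is $k$ and the two off-diagonal entries lie in $\{0,-1,1-k\}$ and sum to $-k$; a quick case check over unordered pairs from $\{0,-1,1-k\}$ shows that (for $k\geq 3$) the only admissible pair is $\{-1,1-k\}$. The same holds column-wise by (P3). So the positions of $-1$ among the off-diagonal cells form a fixed-point-free permutation of $\{1,2,3\}$ (necessarily a $3$-cycle), and the positions of $1-k$ form the other $3$-cycle; this uniquely determines
$$A=\begin{pmatrix} k & -1 & 1-k \\ 1-k & k & -1 \\ -1 & 1-k & k \end{pmatrix}$$
up to relabelling of the rows.

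Second, plugging $r_1=0$ into $A\mathbf{u}=\mathbf{0}$, the first row gives $r_2=(1-k)r_3$ and the second gives $r_3=kr_2$; chaining them yields $(k^2-k+1)\,r_3\equiv 0\pmod p$. Since we require $r_2,r_3\neq 0$ with $r_2\neq r_3$, the existence of an admissible $\mathbf{u}$ is equivalent to the existence of $k\neq 0,1$ with $k^2-k+1\equiv 0\pmod p$ (the distinctness constraints reduce to $k\neq 0,1$, already assumed).

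I would close with the standard number-theoretic fact that $x^2-x+1$ has a root modulo an odd prime $p$ exactly when its discriminant $-3$ is a quadratic residue modulo $p$, which occurs precisely when $p\equiv 1\pmod 3$; combined with $p$ odd this gives $p\equiv 1\pmod 6$. The main obstacle is ensuring the $3\times 3$ enumeration is genuinely exhaustive, and in particular handling the degenerate value $k=2$, where $-1$ and $1-k$ coincide so the sum-$-k$ analysis has to be redone: then every off-diagonal entry is $-1$ and the linear system collapses to $3r_3\equiv 0\pmod p$, which has no valid solution once $p\neq 3$, consistent with the stated congruence.
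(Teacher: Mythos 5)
Your proposal is correct and takes essentially the same route as the paper: both reduce the question to the Section~3 matrix condition and arrive at $k^2-k+1\equiv 0\pmod p$ (you by writing down the unique admissible $3\times 3$ matrix and solving $A\mathbf{u}\equiv\mathbf{0}$ with $r_1=0$, the paper by noting $\det(A')=k^2-k+1$ must be divisible by $p$), then finish with the same quadratic-residue fact about $-3$, and your solution $(r_1,r_2,r_3)=(0,1,k)$ after scaling is exactly the paper's explicit permutation of rows $0$, $1$, $k$. Your enumeration of the off-diagonal pattern and the separate treatment of $k=2$ are just details the paper leaves implicit.
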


\begin{proof}
From the theory in the previous section, the determinant of $A'$ must be equal to $k^2-k+1$.
However $k^2-k+1=0$ has a solution mod $p$ if and only if $-3$ is a square mod $p$.
Elementary number theory can be used to show that $-3$ is a square mod $p$ if and only if $p\equiv 1\mod{6}$.
Finally, if $p\equiv 1\mod{6}$, replacing row $0$ with row $1$, row $1$ with row $k$ and row $k$ with row $0$ in $B_p$ creates a Latin square which remains
 orthogonal to $B_p(k)$.
\end{proof}

It is an open problem to determine whether there exists an orthogonal trade permuting a bounded number of rows for any odd prime $p$.

\section{Orthogonal trades via Latin trades in $B_p$}

Our aim in this section is to construct an orthogonal trade $T$ with index $(1,2)$ with disjoint mate $T'$ such that $T'$ permutes $O(\log{p})$ entire rows of $T$.
We do this by showing the existence of orthogonal Latin trades in $B_p$ with
size $O(\log{p})$.

\begin{theorem}
For each prime $p$ there exists a Latin trade $T$ of size $O(\log{p})$ within
$B_p$ such that each symbol occurs either twice in $T$ or not at all.
\label{szz}
\end{theorem}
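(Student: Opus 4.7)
Plan: The goal is an explicit Latin trade $T\subset B_p$ of size $O(\log p)$ in which each symbol appears twice or not at all. Lemma \ref{bcc2} already gives the matching lower bound $|T|\geq mp^{1/m}+2=\Omega(\log p)$, minimised near $m\approx \log_2 p$, so the construction must be essentially tight, using roughly $\log_2 p$ rows, columns and symbols each with multiplicity two.

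I would work on the \emph{symbol-side} version of the trade matrix machinery from Section 2, which is available because of the row/symbol symmetry of $B_p$. A Latin trade in which every symbol appears exactly twice corresponds to an $(m+1)\times (m+1)$ matrix $A=2I-M$ with $a_{ii}=2$, row and column sums zero, and off-diagonal entries in $\{0,-1\}$ by Lemma \ref{nointerc}; here $M$ is the adjacency matrix of a 2-regular directed graph on the symbols of $T$, and we need a solution $Y=(s_0,\dots,s_m)^T$ of $AY\equiv \mathbf{0}\pmod p$ consisting of distinct residues. The choice of $A$ (and of $Y$) would come from a short $\mathbb{Z}_p$-linear relation among powers of $2$: the non-adjacent signed binary representation of $p$ yields coefficients $\epsilon_i\in\{-1,0,1\}$ with only $O(\log p)$ nonzero terms such that $\sum_i\epsilon_i 2^i\equiv 0\pmod p$. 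From this relation I would read off the vertex set of $M$ (taken as the partial sums $\sum_{i\leq j}\epsilon_i 2^i$ modulo $p$, which serve as the symbols of $T$) and its edges (encoding the doublings in the relation). The cells of $T$ are then recovered from the two $-1$ entries of each row of $A$, and the disjoint mate $T'$ is obtained by reassigning symbols cyclically along the directed cycles of $M$; this gives $|T|=2(m+1)=O(\log p)$.

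The hard part will be verifying that this cyclic reassignment is simultaneously consistent with the row, column and symbol structure of $B_p$. Because $B_p$ has no intercalates, pairs of cells carrying a common symbol cannot be swapped inside two rows in isolation; the directed graph underlying $M$ must therefore be long enough, and arranged so that the wrap-around forced by $\sum_i\epsilon_i 2^i\equiv 0\pmod p$ resolves every potential clash in rows and columns. I expect the verification to proceed by case analysis on the signs of consecutive $\epsilon_i$, together with a small boundary adjustment when the signed expansion is degenerate, for example for Mersenne primes $p=2^n-1$, where one or two extra cells may need to be adjoined to close the chain of symbols.
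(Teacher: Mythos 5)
There is a genuine gap: your argument runs the Section~2 machinery in the wrong direction and never actually produces a trade. The correspondence you invoke (a Latin trade with every symbol occurring twice $\rightarrow$ a matrix $A=2I-M$ with row/column sums zero and a solution of $AY\equiv\mathbf{0}\pmod p$ in distinct residues) is only a \emph{necessary} condition, coming from Lemma~\ref{bcc1}; nothing in the paper, and nothing in your sketch, shows that an arbitrary such pair $(A,Y)$ -- in particular the one you extract from a non-adjacent signed binary expansion of $p$ and its partial sums -- is realised by an actual subset of cells of $B_p$ together with a disjoint mate $T'$. The only converse statement available is Corollary~\ref{striker}, and it applies to a completely different situation (trades formed by permuting \emph{entire} rows, of size $pm$, where realizability is trivial because any row permutation of $B_p$ is a Latin square). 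For a partial trade with each symbol used exactly twice, choosing the cells and checking that $T'$ is a partial Latin square occupying the same cells, with the same row contents, the same column contents, and the same pair of symbols in the right places, is precisely the hard combinatorial content of Theorem~\ref{szz}; you defer it entirely (``I expect the verification to proceed by case analysis \dots''), so the heart of the proof is missing. The remark about Mersenne primes and ``boundary adjustments'' signals that even you anticipate the chain may not close, but no mechanism for closing it is given.

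For comparison, the paper resolves exactly this realizability issue geometrically rather than algebraically: it builds, recursively \`a la Szabados, a dissection of an $n\times(n+3)$ rectangle into $O(\log n)$ squares satisfying the ``good'' conditions (G1)--(G4) (Lemma~\ref{goodies}), and then invokes Dr\'apal's planar construction, which converts any such dissection into a Latin trade in $B_{2n+3}$ whose cells are read off from the corners of the constituent triangles. Condition (G3) -- no line of gradient $-1$ through corners of two different squares -- is what guarantees that every symbol occurs exactly twice, and the square count bounds $|T|$ by $O(\log p)$. If you want to salvage an algebraic route from a short relation $\sum_i\epsilon_i2^i\equiv 0\pmod p$, you would need to prove a realization lemma of your own (in effect re-deriving a dissection or an explicit cell-by-cell mate, as the paper does for its orthogonal trade in Section~7), not merely exhibit the matrix $A$ and the vector $Y$.
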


\begin{theorem}
For each prime $p$ there exists an orthogonal trade of index $(1,2)$ permuting $O(\log{p})$ rows.
\label{yoyo}
\end{theorem}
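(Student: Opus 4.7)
The plan is to apply Corollary \ref{striker} with $k=2$, using the Latin trade supplied by Theorem \ref{szz} to furnish both the required matrix and its solution vector in one stroke.

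Let $T$ be the Latin trade of size $O(\log p)$ in $B_p$ from Theorem \ref{szz}, in which each symbol appears either twice or not at all, and let $s_1,\dots,s_m$ be its distinct symbols, so that $m = |T|/2 = O(\log p)$. Applying Lemma \ref{bcc1} in its symbol-indexed form (permissible by the equivalences of $B_p$ noted in Section 2), we obtain an $m\times m$ trade matrix $A$ with $A\mathbf{s}\equiv\mathbf{0}\pmod{p}$, where $\mathbf{s}=(s_1,\dots,s_m)^T$; every diagonal entry equals $2$, every off-diagonal entry is a non-positive integer, and all row and column sums are $0$.

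The next step is to verify that $A$ satisfies (P1)--(P3) of Section 3 with $k=2$. Since the off-diagonal row sums equal $-2$ and each off-diagonal entry is a non-positive integer, each such entry lies in $\{-2,-1,0\}$, and Lemma \ref{nointerc} rules out the value $-2$. Hence each row (and by symmetry each column) has exactly two off-diagonal entries equal to $-1$ and the rest $0$. As $1-k=-1$ for $k=2$, this matches (P2) precisely, while (P1) and (P3) are immediate. The $s_i$ are distinct residues modulo $p$, so the pair $(\mathbf{s},A)$ is exactly the data required by Corollary \ref{striker}, which delivers an orthogonal trade of index $(1,2)$ whose disjoint mate is obtained by permuting the $m$ rows $s_1,\dots,s_m$ of $B_p$. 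Since $m=O(\log p)$, this is the desired trade.

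The one subtlety is that for $k=2$ the two off-diagonal values $-1$ and $-(k-1)$ coincide, so the decomposition of the $-1$-pattern of $A$ into the two derangements $\phi,\phi'$ used in the proof of Corollary \ref{striker} is not forced by the entries of $A$ alone. However, that pattern describes a $2$-regular bipartite graph on $\{s_1,\dots,s_m\}$ (two $-1$'s per row and per column), which by K\"onig's theorem decomposes into two perfect matchings. These yield derangements $\phi,\phi'$ of $\{s_1,\dots,s_m\}$ with $\phi(s_i)\neq\phi'(s_i)$ for every $i$, and the relation $\phi(s_i)+\phi'(s_i)=2s_i\pmod{p}$ needed by Corollary \ref{striker} follows from $A\mathbf{s}\equiv\mathbf{0}\pmod{p}$. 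This mild adaptation is the only step requiring more than a direct re-reading of Theorem \ref{szz}'s output.
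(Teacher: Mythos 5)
Your proposal is correct and follows essentially the same route as the paper: take the trade of Theorem \ref{szz}, pass to its symbol-indexed trade matrix with diagonal entries $2$, use Lemma \ref{nointerc} to exclude $-2$ entries, and feed the resulting matrix and its solution vector of distinct residues into Corollary \ref{striker} with $k=2$. Your closing remark resolving the $k=2$ degeneracy (where $-1$ and $-(k-1)$ coincide) by splitting the $2$-regular bipartite $-1$-pattern into two perfect matchings is a sound refinement of a point the paper leaves implicit.
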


\begin{proof}
From Section 2, 
 the trade matrix $A$ 
corresponding to the trade $T$ given
by the previous theorem has the following properties.
Firstly, the number of rows (and the number of columns of $A$) is $O(\log{p})$.
Secondly, each entry of the main diagonal is $2$, every other entry is either $-2$, $-1$ or $0$ and the row and column sums are at least $0$.
From Lemma \ref{nointerc}, there are no entries $-2$. 
Moreover, from Lemma \ref{bcc2}, $A{\bf u}={\bf 0}$ has a solution in ${\mathbb Z}_p$ where the entries of ${\bf u}$ are distinct.
The result follows by Corollary \ref{striker}.
\end{proof}

In order to prove Theorem \ref{szz} we modify a construction given by Szabados \cite{sz} which proved the following.

\begin{theorem} {\rm (Szabados, \cite{sz})}
For each prime $p$ there exists a Latin trade of size at most $5\log_2{p}$ within
$B_p$.
\end{theorem}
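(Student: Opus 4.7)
The plan is to follow Szabados's approach, which combines a pigeonhole argument for subset sums of powers of $2$ modulo $p$ with an explicit conversion of the resulting relation into a Latin trade whose size scales with the length of the relation.

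\smallskip
\emph{Pigeonhole step.} Set $t=\lfloor\log_2 p\rfloor+1$, so that $2^t>p$. Among the $2^t$ subset sums $\sum_{i\in S}2^i\pmod p$ indexed by $S\subseteq\{0,1,\dots,t-1\}$, the pigeonhole principle forces a collision $\sum_{i\in S}2^i\equiv\sum_{i\in S'}2^i\pmod p$ with $S\ne S'$. Removing the common part gives disjoint nonempty $A,B\subseteq\{0,1,\dots,t-1\}$ with
$$\sum_{i\in A}2^i\equiv\sum_{i\in B}2^i\pmod p,$$
equivalently a $\{-1,0,+1\}$-relation $\sum_{i=0}^{t-1}\epsilon_i 2^i\equiv 0\pmod p$ with at most $t\leq\log_2 p+1$ nonzero terms.

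\smallskip
\emph{Construction step.} Order the nonzero $\epsilon_i$ and, for each consecutive pair, attach a constant-size ``gadget'' of cells to the trade-in-construction, indexed by the partial sums $s_j:=\sum_{i<j}\epsilon_i 2^i \pmod p$. Each gadget uses the pair of rows $\{s_{j-1},s_j\}$ (and corresponding columns and symbols) together with a small number of auxiliary cells chosen so that the local row/column/symbol balance within the gadget corrects the unbalanced contribution of its neighbours. By design each gadget contributes at most $5$ cells, which is where the constant in the bound $5\log_2 p$ comes from; the additive constant is absorbed by checking small $p$ separately.

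\smallskip
\emph{Verification and main obstacle.} One then verifies that the union $T$ of gadgets is a genuine Latin trade in $B_p$: for every nonempty row, column, and symbol the cells pair up so as to admit a disjoint mate $T'$. This works because the partial-sum chain $(s_j)$ closes up (its net increment is $0$ by the pigeonhole step), so the ``boundary'' contribution of the first gadget cancels against that of the last, and the shared indices of adjacent gadgets contribute the correct multiplicities. The main obstacle I expect is this bookkeeping: one must simultaneously (i) exhibit an explicit gadget that is always a legitimate local piece of a Latin trade in $B_p$ regardless of which $\epsilon_i$ is being processed, (ii) show that adjacent gadgets glue cleanly along the shared row/column/symbol $s_j$, and (iii) keep the per-gadget cost down to $5$ cells. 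A secondary difficulty is the degenerate case where the partial-sum chain revisits a residue prematurely; this can be handled by extracting the shorter sub-relation implicit in the revisit and recursing, since any such sub-relation is itself a valid input to the construction.
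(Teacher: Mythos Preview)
Your proposal has a genuine gap: the entire construction step is a promissory note rather than a proof. You assert that a constant-size ``gadget'' exists for each consecutive pair of partial sums $s_{j-1},s_j$, that adjacent gadgets glue cleanly, and that each gadget costs at most $5$ cells, but you neither exhibit the gadget nor justify the constant~$5$. You even flag this yourself as ``the main obstacle.'' Without the gadget written down and checked, there is no proof here---only a plan whose feasibility is unestablished. In particular, the differences $s_j-s_{j-1}=\pm 2^{i_j}$ vary wildly in size, and it is not at all obvious that a bounded number of cells in $B_p$ can absorb an arbitrary jump of this kind while remaining a legitimate piece of a Latin trade that glues to its neighbours.

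It is also worth noting that your route is quite different from Szabados's actual argument (which the paper cites and then adapts for the neighbouring Theorem~\ref{szz}). Szabados does not use pigeonhole on subset sums at all. His construction is geometric: one recursively dissects an $n\times(n+3)$ rectangle into integer squares, adding at most $5$ squares per step while halving~$n$ (see Figure~\ref{sz}); this yields $O(\log n)$ squares. The rectangle is then embedded in a right-angled isoceles triangle of side $2n+3$, each square is cut along a diagonal, and Dr\'apal's correspondence converts the triangle dissection into a Latin trade in $B_{2n+3}$ of size twice the number of squares plus two. The constant~$5$ thus comes from the four explicit dissection templates in Figure~\ref{sz}, not from any per-step gadget in the Latin square itself. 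If you want to salvage your algebraic route you would need to produce the missing gadget explicitly and prove it works; as written, the proposal does not establish the theorem.
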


Since our proof is a modification of that given in \cite{sz} (which was in turn inspired by classic results on dissections of squares by Brooks, Smith, Stone  and Tutte \cite{BSST,tut} and Trustum \cite{tru}) we borrow from the notation given in \cite{sz}.

A {\em dissection} of order $k$ of a rectangle $R$ with integer sides is a set of $k$ squares of integral side which partition the area of the rectangle
(i.e. they cover the rectangle and overlap at most on their boundaries).
A dissection is said to be $\oplus$-free if no four of them share a common point.

For the following definition we position our rectangle $R$ with a corner at the origin, its longest side along the positive $x$-axis and
another side along the negative $y$-axis.

We say that a dissection is {\em good} if it is:
\begin{enumerate}
\item[(G1)] $\oplus$-free;
\item[(G2)] the square with the origin as a corner point has side at least $3$; 
\item[(G3)] there is no line of gradient $-1$ intersecting corner points of more than one square; and
\item[(G4)] the lines $y=1-x$ and $y=2-x$ do not intersect corner points of any square.
\end{enumerate}

We wish to construct a good dissection of a rectangle of dimensions $n\times (n+3)$ for any $n\geq 3$.
We first deal with small values of $n$.  
\begin{lemma}
There exists a good dissection of the rectangle $n\times (n+3)$ for $3\leq n\leq 14$
with at most $8$ squares.
\end{lemma}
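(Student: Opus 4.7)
The plan is to exhibit, for each value of $n$ in the range $3 \leq n \leq 14$, an explicit dissection of the $n \times (n+3)$ rectangle into at most eight integer-sided squares, and then to verify directly that each dissection satisfies (G1)--(G4). With only twelve cases this is fundamentally a case check; the creative work lies in choosing dissections whose corner structure avoids the antidiagonal collisions forbidden by (G3) and (G4). I would present the resulting twelve dissections in a single figure.

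For each $n$ I would position the rectangle at $[0, n+3] \times [-n, 0]$ and record the dissection as a short table listing, for each constituent square, a designated corner together with its side length. A natural family of candidates takes a square of side $n$ in one corner of the rectangle together with a dissection of the remaining $n \times 3$ strip into squares of side $1$, $2$, and $3$; for the larger values, say $n \geq 9$, one can iterate by peeling off a further $3 \times 3$ square from the strip. The placement freedom in these choices is exactly what lets one satisfy the antidiagonal conditions, and for the smallest values one has to resort to bespoke layouts.

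The verification is mechanical once the dissections are in hand. Condition (G1) holds because with at most eight squares the list of interior vertices is short enough to inspect by hand; condition (G2) is guaranteed at the design stage by always placing a square of side at least $3$ at the origin. For (G3) and (G4) the key observation is that the antidiagonal line $y = c - x$ meets a corner $(x_0, y_0)$ precisely when $x_0 + y_0 = c$, so these conditions reduce to tabulating the quantity $x_0 + y_0$ at every corner of every square in the dissection and checking that (i) no corner sum equals $1$ or $2$, and (ii) no value of $x_0 + y_0$ is attained by corners of two distinct squares.

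The main obstacle is (G3). The most obvious dissections fail it immediately: the natural tiling of a $3 \times 6$ rectangle by two $3 \times 3$ squares has corners at $(0,0)$ and $(3,-3)$, both on $y = -x$. Avoiding such alignments typically forces a small asymmetric perturbation such as inserting an extra square of side $1$ or $2$ to shift subsequent corner positions off the bad antidiagonal. I expect the cases $n \in \{3,4,5\}$ to be the tightest, since there the eight-square budget leaves the least room to insert auxiliary squares for this purpose; for larger $n$ there is enough slack that a handful of candidate layouts will be good after a routine inspection.
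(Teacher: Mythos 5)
Your overall strategy is the same as the paper's: a finite case check in which each $n\times(n+3)$ rectangle, $3\le n\le 14$, is dissected into an $n\times n$ square in a corner plus a handful of small squares in the leftover $3\times n$ strip, with (G1)--(G4) verified by hand via the corner sums $x_0+y_0$. The paper streamlines the bookkeeping by always anchoring the $n\times n$ square at the origin and a $3\times 3$ square at $(n+3,-n)$ (which makes (G2) and (G4) automatic, since every corner sum then lies outside $\{1,2\}$) and then squaring the remaining $(n-3)\times 3$ strip greedily with at most $6$ further squares; so up to this point your proposal and the paper coincide.

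The genuine problem is your verification criterion (ii) for (G3): demanding that no value of $x_0+y_0$ be attained by corners of two distinct squares is unsatisfiable, so your case check as written would reject every candidate dissection. Any dissection of the rectangle into at least two squares has two squares whose top sides meet at an interior point of the top edge, and that point is a corner of both; moreover, in your layouts the two sum-equal corners of the side-$n$ square are $(0,0)$ and $(n,-n)$, and $(n,-n)$ is forced to be a corner of some square of the strip, so the line $x+y=0$ always meets corners of two distinct squares. The paper's own exemplar, the $5\times 8$ dissection of Figure \ref{good5} which yields the displayed trade in $B_{13}$, fails this literal test for exactly these reasons, and your diagnosis of the two-square tiling of the $3\times 6$ rectangle as a (G3) violation reflects the same over-strict reading. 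What (G3) must deliver downstream is that, after each square is cut along its gradient $-1$ diagonal, every line of gradient $-1$ meets at most two distinct vertices, so that each symbol of $B_{m+n}$ occurs exactly twice in the resulting trade; the check to run is therefore that each value of $x_0+y_0$ is attained by at most two distinct corner \emph{points} of the dissection, with a corner shared by adjacent squares counted once and the two endpoints of a single square's diagonal being the allowed pair. With that corrected criterion (and the paper's placement of the $3\times 3$ square at $(n+3,-n)$, which disposes of (G4) uniformly), your plan goes through and the small cases $n\in\{3,4,5\}$ are no tighter than the rest.
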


\begin{proof}
In every case, make one of the squares an $n\times n$ square with the origin as a corner point and another $3\times 3$ subsquare with $(n+3,-n)$ as a corner point. 
Then (G2) and (G4) are satisifed. It is then easy to find a dissection of the remaining $(n-3)\times 3$ rectangle satisfying (G1) and (G3), using at most $6$ squares for each case (one can simply use a greedy algorithm, cutting off a largest possible square at each step). 
\end{proof}

Figure \ref{good5} displays a good dissection of the $5\times 8$ rectangle into $5$ squares.

\begin{figure}[ht]
\begin{center}
\includegraphics{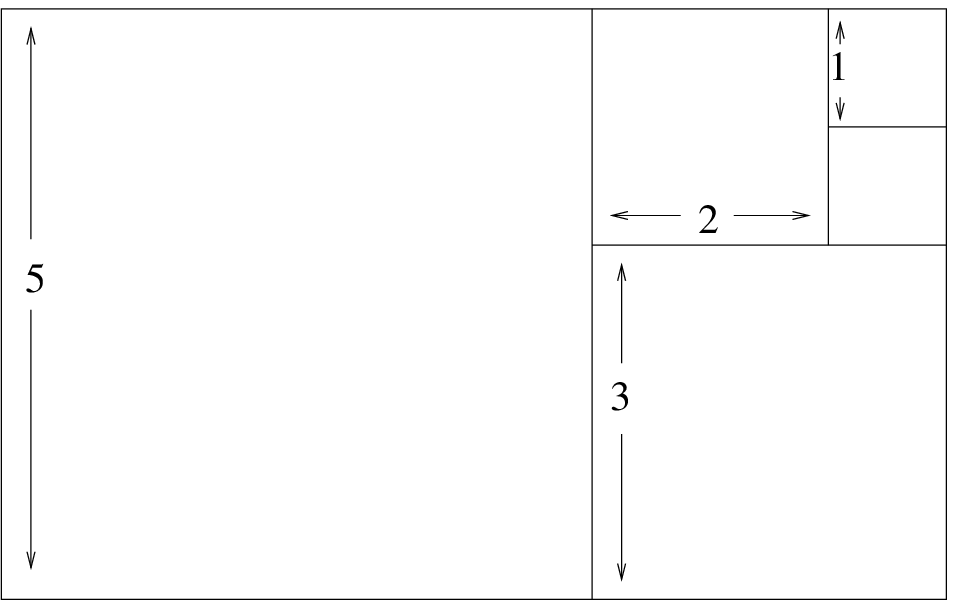}
\end{center}
\caption{An example of a good dissection into squares}
\label{good5}
\end{figure}

For $n$ of the form $4k+z$ with $k\geq 3$, $z\in \{3,4,5,6\}$
we may dissect an $n\times (n+3)$ rectangle into at most $5$ squares and a rectangle of size
$2k\times 2(k+3)$, as shown in Figure \ref{sz}.

\begin{figure}[htbp]
\begin{center}
\input{szaba.pstex_t}
\end{center}
\caption{
Dissecting a rectangle of size $n\times (n+3)$
\ (Figure $1$. from \cite{sz}) 
}
\label{sz}
\end{figure}

\begin{lemma}
For each $n\geq 3$, there exists a good dissection of
an $n\times (n+3)$ rectangle using at most $3+5\log_4{(n+1)}$ squares.
\label{goodies}
\end{lemma}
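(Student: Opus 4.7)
The plan is strong induction on $n$. For the base case $3\leq n\leq 14$, the preceding lemma already yields a good dissection into at most $8$ squares, and since $\log_4(n+1)\geq 1$ for every $n\geq 3$, we have $3+5\log_4(n+1)\geq 8$, so the claimed bound holds.

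For $n\geq 15$, I would write $n=4k+z$ with $k\geq 3$ and $z\in\{3,4,5,6\}$, and apply the construction of Figure~\ref{sz} to decompose the $n\times(n+3)$ rectangle into five explicit squares together with an inner rectangle of dimensions $2k\times 2(k+3)$. The key point is that this inner rectangle is precisely a factor-$2$ scaling of a $k\times(k+3)$ rectangle, so I would invoke the induction hypothesis on $k\times(k+3)$ to obtain a good dissection into at most $3+5\log_4(k+1)$ squares, scale every coordinate by $2$, and translate the result into the inner rectangle's position. The square count then becomes
$$
5+3+5\log_4(k+1) \;=\; 8+5\log_4(k+1) \;\leq\; 3+5\log_4(n+1),
$$
where the inequality uses $n+1\geq 4k+4=4(k+1)$, i.e.\ $\log_4(n+1)\geq 1+\log_4(k+1)$.

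The main obstacle is verifying that the composite dissection remains good. Property (G2) is immediate, since the outer construction places a square of side $n\geq 15$ at the origin. Property (G1) requires checking that at each corner of the inner rectangle, and at each coincidence on its boundary, at most three pieces meet; this follows from inspecting the specific placements in Figure~\ref{sz} together with the $\oplus$-freeness of the scaled inner dissection. Properties (G3) and (G4) are the more delicate ones. The key observation is that scaling by $2$ multiplies every anti-diagonal sum $x+y$ by $2$ and translation adds a constant, so mutual anti-diagonal distinctness among the scaled inner corners, as well as their avoidance of the forbidden sums $1$ and $2$, are largely inherited from the induction hypothesis. What remains is a case analysis, carried out separately for each $z\in\{3,4,5,6\}$ using the coordinate data in Figure~\ref{sz}, to rule out (a) an inner corner sharing an anti-diagonal with any of the finitely many outer corners (for (G3)), and (b) an inner corner lying on $y=1-x$ or $y=2-x$ (for (G4)). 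Both reduce to finite parity/congruence conditions on the translation vectors prescribed by the figure. These verifications are mechanical but form the bulk of the technical work behind the induction.
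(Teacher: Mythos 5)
Your skeleton is the paper's: base cases $3\leq n\leq 14$ from the previous lemma, then for $n=4k+z$, $z\in\{3,4,5,6\}$, the decomposition of Figure \ref{sz} into at most $5$ squares plus a doubled copy of a good $k\times(k+3)$ dissection; and your count via $n+1\geq 4(k+1)$ is a correct (and slightly cleaner) repackaging of the paper's bound on the recursion depth $\alpha\leq\log_4(n+1)-1$. The gap is in the verification that the composite dissection is good, which is where the content of the lemma lies. You defer (G3) and (G4) to an unexecuted ``mechanical'' case analysis over $z$, but the whole reason the definition of \emph{good} includes conditions (G2) and (G4) is that the inner dissection's own properties certify the outer ones, with no per-$z$ corner chase needed: after doubling, every corner of the inner dissection has even coordinates relative to the inner rectangle's origin, so anti-diagonal lines at odd offsets from that origin cannot meet them (this is the paper's parity remark); the remaining dangerous lines are exactly the doubled images of $y=1-x$ and $y=2-x$ in the inner coordinates, so inner (G4) with respect to $y=1-x$ yields (G3) for the large rectangle, inner (G4) with respect to $y=2-x$ yields (G4) for the large rectangle, and inner (G2) (origin square of side at least $3$, hence at least $6$ after scaling) is what prevents four pieces meeting at the junction corner of the inner rectangle, giving (G1). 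Without identifying some argument of this kind, your proposal asserts rather than proves the inductive preservation of goodness.

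A secondary error: your justification of (G2) claims Figure \ref{sz} places a square of side $n$ at the origin. It does not (that would leave only an $n\times 3$ strip, which is the small-case construction); the origin square in Figure \ref{sz} has side on the order of $2k$. The conclusion still holds, but the correct reason is $k\geq 3$, which is also exactly what feeds the induction, since the recursion bottoms out only when $k\geq 3$, i.e.\ it is applied only while $n\geq 15$.
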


\begin{proof}
From the previous lemma, the result holds for $3\leq n\leq 14$.
If $n\geq 15$, write $n=4k+z$ where $k\geq 3$ and $z\in \{3,4,5,6\}$ and use a dissection as in Figure \ref{sz},
recursively using a good dissection of the $k\times (k+3)$ rectangle with the length of each square doubled.
Property (G2) of the smaller rectangle ensures that (G1) holds for the larger rectangle.
Property (G2) clearly holds for the larger rectangle as $k\geq 3>0$.
Next, property (G4) (avoiding the line $y=1-x$) for the smaller rectangle ensures that (G3) holds for the larger rectangle.
Finally, property (G4)(avoiding the line $y=2-x$) for the smaller rectangle ensures than (G4) holds for the larger rectangle. Note in the previous that $y=2-x$ with respect to the larger rectangle cannot hit any corners of squares in the smaller rectangle because each square has even length side.

Suppose such a recursion occurs $\alpha$ times to an intial rectangle of order $m\times (m+3)$ where $3\leq m\leq 14$. 
Then $n\geq g^{\alpha}(m)$, where $g(m)=4m+3$ and $g^{\alpha}$ is the function $g$ composed with itself ${\alpha}$ times. 
Observe that $g^{\alpha}(m)=4^{\alpha}m+4^{\alpha}-1$. 
Thus $(n-4^{\alpha}+1)/4^{\alpha}\geq 3$ 
and $\alpha\leq \log_4{(n+1)}-1$. Each recursive step gives at most $5$ extra squares; with at most $8$ squares in the initial step, the result follows
 by Lemma \ref{goodies}.
\end{proof}

The proof of Theorem \ref{szz} now follows from the following theorem, which is
outlined in \cite{sz} and first established in \cite{Dr}.

\begin{theorem}
Suppose there exists a good dissection of order $k$ of an $n\times m$ rectangle.
Then there exists a Latin trade $T$ in the addition table for the integers modulo $m+n$ (i.e. $B_{m+n}$ if $m+n$ is prime) such that each entry of $T$ appears exactly twice and $T$ has size $2k+2$.
\end{theorem}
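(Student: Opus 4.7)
The plan is to follow the Dr\'apal-style correspondence between squared rectangles and Latin trades in the cyclic Latin square. Position the $n\times m$ rectangle with its top-left corner at the origin and identify each integer grid point $(i,j)$ with $0\le i\le n$, $0\le j\le m$ with the cell $(i,j)$ of $B_{m+n}$, whose entry is $i+j \bmod (m+n)$. Under this identification, lines of gradient $-1$ become constant-entry anti-diagonals.

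For each of the $k$ dissection-squares $S_\ell$ with top-left corner $(a_\ell,b_\ell)$ and side $s_\ell$, include in $T$ the two grid corners $(a_\ell, b_\ell+s_\ell)$ and $(a_\ell+s_\ell, b_\ell)$ lying on the anti-diagonal through $S_\ell$; both carry the common entry $a_\ell+b_\ell+s_\ell \bmod (m+n)$. In addition, include the two rectangle-corner cells $(0,0)$ and $(n,m)$, both carrying entry $0\bmod (m+n)$. This gives $|T|=2k+2$. Property (G3) forces the entries $a_\ell+b_\ell+s_\ell$ to be pairwise distinct across squares, and since $1\le a+b+s\le m+n-1$ for every square inside the rectangle, none of these equals the extra entry $0$; hence each of the $k+1$ distinct entries appears in $T$ exactly twice.

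The disjoint mate $T'$ is defined on the same cells by a family of ``opposite-corner swaps'': for each $S_\ell$, the entry $a_\ell+b_\ell+s_\ell$ sitting in $T$ at $(a_\ell, b_\ell+s_\ell)$ and $(a_\ell+s_\ell, b_\ell)$ is instead deposited at the other two corners $(a_\ell, b_\ell)$ and $(a_\ell+s_\ell, b_\ell+s_\ell)$ in $T'$; and the extra entry $0$ sitting at $(0,0)$ and $(n,m)$ is instead deposited at $(0,m)$ and $(n,0)$. It remains to verify that (i) every cell of $T$ receives a deposit from exactly one swap, (ii) the resulting $T'$-entry differs from the $T$-entry at that cell, and (iii) in every row and every column the multisets of $T$- and $T'$-entries agree. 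Property (G1) excludes the four-square junctions at which two swaps (one from a top-left, one from a bottom-right) would deposit at the same cell; at a T-junction only one of the incident squares has the corner as an on-diagonal endpoint. Property (G3) forces (ii) by making the entries of distinct squares distinct. Properties (G2) and (G4) ensure that the origin square is large enough and that no small-side configurations sit on the anti-diagonals $x+y=1,2$, so that the four rectangle-corner cells slot cleanly into the swap structure.

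The main obstacle will be the local bookkeeping at each interior grid corner and at the four rectangle corners, where one must verify that exactly one swap (either from a dissection-square or from the extra pair) deposits at each cell, and that the deposited entry differs from the original. Once this case analysis is complete, step (iii) reduces to a telescoping argument: along any horizontal line $y=i$ or vertical line $x=j$ of the dissection, the cells of $T$ in that row or column sit at the endpoints of the dissection edges along that line, and the opposite-corner swaps cyclically shift entries along this sequence of endpoints, preserving the multiset.
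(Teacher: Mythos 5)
Your construction is, in substance, exactly the one the paper uses: cutting each dissection square along its anti\-/diagonal inside a large right isosceles triangle and applying Dr\'apal's planar construction \cite{Dr} produces precisely your cell set (all corner-vertices of the dissection, i.e.\ your anti-diagonal corners together with the two rectangle corners) and precisely your mate $T'$, with each square of side $s_\ell$ depositing the value $a_\ell+b_\ell+s_\ell$ at its two main-diagonal corners and the value $0$ at the remaining two rectangle corners; your size count $2k+2$ and your use of (G3) for ``each symbol exactly twice'' also match the paper. The difference is how the Latin-trade property itself is discharged: the paper cites Dr\'apal's theorem for it, whereas you propose a direct verification --- and then leave exactly that verification, your items (i), (ii), (iii), as ``remaining''. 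Those items \emph{are} the content of the claim that $T'$ is a disjoint mate of $T$, so as written the proposal defines the right object but does not prove it is a Latin trade; that is a genuine gap (though a repairable one), and your sketched ``telescoping'' for (iii) is not correct as stated, because a cell of $T$ can lie in the interior of a horizontal or vertical edge of a square (a T-junction), so the cells along a row are not simply paired off square by square.

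Concretely, the missing pieces go as follows. For (i), (G1) is the key: since no four squares share a point, every dissection vertex other than the four rectangle corners is a main-diagonal corner of exactly one square and an anti-diagonal corner of exactly one square (at a T-junction the two corner-squares contribute one of each type); hence your $T$-cells and your deposit targets are the same $2k+2$ cells and each cell receives exactly one deposit. For (ii), no appeal to (G3) is needed: at a main-diagonal corner of a square the deposited value differs from the cell's own entry by $\pm s_\ell\not\equiv 0\pmod{m+n}$, and similarly at $(0,m)$, $(n,0)$. For (iii), the correct argument is not a telescoping along consecutive endpoints but a bijection: the $T'$-entry at a cell $v$ of row $r$ equals the $T$-entry of another cell of row $r$ (the other horizontal corner of the unique square having a main-diagonal corner at $v$, or $(0,0)$, $(n,m)$ for the two special deposits); by (G3), and since no square satisfies $a_\ell+b_\ell+s_\ell\equiv 0$, this assignment is injective on the cells of row $r$, hence a bijection, so the row multisets of $T$ and $T'$ agree, and columns are symmetric. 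With these repairs your argument becomes a self-contained proof and reproduces exactly the trade the paper obtains via \cite{Dr} (compare your output with the $B_{13}$ example listed after the theorem).
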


\begin{proof}
The proof follows from the construction, first given in \cite{Dr}, showing that a dissection of a right-angled isoceles triangle (with two sides of length $p$) into smaller, integer-sided 
right-angled isoceles triangles 
  gives rise to a Latin trade $T$ in $B_p$, provided that no point is the vertex of $6$ of the smaller triangles.
In such a construction, the number of smaller triangles gives the size of the Latin trade. 
Reposition the triangle on the Euclidean plane so that its vertices have positions $(0,0)$, $(0,p)$ and $(p,0)$. 
Then the coordinates of the vertices of the smaller triangles give precisely the cells of $B_p$ which $T$ occupies. 


Next, reposition the $n\times m$ rectangle 
so that its vertices have coordinates $(0,0),(0,m),(n,0)$ and $(m,n)$. 
Embed this rectangle into 
 an isocoles right-angled triangle as above (with two equal sides of length $n+m$). Dissect each square in the good dissection into two triangles so that the sides of each triangle are parallel to the larger triangle. 
This gives a dissection of the right-angled triangle into $2k+2$ smaller right-angled isoceles triangles. 
Reposition the triangle as above. 

Then in our construction, the line segements of gradient $-1$ contain the same symbol in $B_p$. Each such line segment intersects only two corners of squares and thus only two vertices of triangles. Together with condition (G3), this ensures that each symbol occurs exactly twice in the Latin trade. 
\end{proof}

Apply the process in the above theorem to the example in Figure \ref{good5}. 
This results in the following Latin trade $T$ in $B_{13}$ (with a unique disjoint mate $T'$):
$$
\begin{array}{l}
T:=\{(0,0,0),(0,5,5),(5,0,5),(5,3,8),(8,0,8),(5,5,10),(7,3,10),  \\
(7,4,11),(8,3,11),(7,5,12),(8,4,12),(8,5,0)\}.  \\
T':=\{(0,0,5),(0,5,0),(5,0,8),(5,3,10),(8,0,0),(5,5,5),(7,3,11),  \\
(7,4,12),(8,3,8),(7,5,10),(8,4,11),(8,5,12)\}.  \\
\end{array} 
$$
Note that each symbol occurs twice. For a general proof of why this construction gives a Latin trade, see \cite{Dr}. 

\section{Orthomorphisms of cyclic groups and transversals in $B_p$}

As in previous sections we assume that $p$ is prime. 
An {\em orthomorphism} of the cyclic group ${\mathbb Z}_p$ is a permutation
$\phi$ of the elements of ${\mathbb Z}_p$ such that $\phi(x)-x$ is also
a permutation.
We note that orthomorphisms have a more general definition for arbitrary groups. 
However in this section we assume that orthomorphisms are of the cyclic group only. 
Trivial examples of orthomorphisms are given by $\phi(x)=kx$ for any $k$, $2\leq k\leq p-1$.
Given any orthomorphism $\phi$,
construct a Latin square $L_{\phi}$ by placing $\phi(r) +c$ in cell $(r,c)$. Then
by definition $L_{\phi}$ is orthogonal to $B_p$.

Given two orthomorphisms $\phi$ and $\phi'$, the {\em distance} between $\phi$ and $\phi'$ is defined to be the number of values $x$ for which $\phi(x)\neq \phi'(x)$.
Corollary \ref{permm} implies the following result about orthomorphisms.
\begin{theorem}
Let $\phi'$ be an orthomorphism not equal to $\phi(x)=kx$. Then the distance between $\phi$ and $\phi'$ is at least
$\log_K{p}+1$ where $K={\rm\ min}\{k,k^{-1}\}$.
\end{theorem}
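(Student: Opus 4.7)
The plan is to convert the pair $(\phi,\phi')$ into a row-permuting orthogonal trade and then apply Corollary \ref{permm}. The key observation is that $L_\phi$ coincides \emph{exactly} with $B_p(k)$, since cell $(r,c)$ of $L_\phi$ contains $kr+c$. By the construction recalled at the start of this section, $L_{\phi'}$ is orthogonal to $B_p=B_p(1)$.

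Next I would compare $L_\phi$ and $L_{\phi'}$ row by row. In row $r$, the entries agree at some column iff they agree at every column, because the equation $kr+c=\phi'(r)+c$ reduces to $\phi(r)=\phi'(r)$ independently of $c$. Let $S=\{r:\phi(r)\neq \phi'(r)\}$, so $|S|$ equals the distance between $\phi$ and $\phi'$. Setting
\[
T=\{(r,c,kr+c):r\in S,\ c\in\mathbb{Z}_p\}\subseteq B_p(k)
\]
and
\[
T'=\{(r,c,\phi'(r)+c):r\in S,\ c\in\mathbb{Z}_p\},
\]
one checks routinely that $T$ is a Latin trade with disjoint mate $T'$ and that $(B_p(k)\setminus T)\cup T'=L_{\phi'}$ is orthogonal to $B_p$, so $T$ is an orthogonal trade of index $(k,1)$. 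Moreover, since row $r$ of $L_{\phi'}$ has the same contents as row $k^{-1}\phi'(r)$ of $B_p(k)$, the trade is obtained by the row permutation $r\mapsto k^{-1}\phi'(r)$. Because $L_{\phi'}$ is itself Latin and agrees with $L_\phi$ outside $S$, this map must restrict to a fixed-point-free bijection of $S$ onto itself, so exactly $|S|$ entire rows are permuted.

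Finally, Corollary \ref{permm} is stated for trades of index $(1,\cdot)$, so I would invoke the reindexing map of Lemma \ref{iceice} with $\ell=k$ to convert our trade to one of index $(1,k^{-1})$; that map rescales only columns and symbols, so it fixes rows and preserves the row-permutation structure. Corollary \ref{permm} then yields $|S|\geq \log_{K'}p+1$ with $K'=\min\{k^{-1},k\}=K$, which is exactly the claimed bound. I do not expect a serious obstacle here. The main routine points to verify are the row-by-row dichotomy, that $T'$ really is a disjoint mate of $T$, and that the reindexing of Lemma \ref{iceice} preserves rows.
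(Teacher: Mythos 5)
Your proposal is correct and is essentially the argument the paper intends: the paper proves this theorem simply by citing Corollary \ref{permm}, and your construction (identifying $L_\phi$ with $B_p(k)$, noting $L_{\phi'}$ differs from it in exactly the rows where $\phi\neq\phi'$, viewing that difference as a row-permuting orthogonal trade, and normalizing the index via Lemma \ref{iceice}) is just the fleshed-out version of that implication. All the verification steps you flag are routine and go through as you describe.
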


A {\em transversal} of a Latin square of order $n$ is a set of ordered triples that include each row, column and symbol exactly once. Given any orthomorphism $\phi$, the set of triples $(x,\phi(x)-x,\phi(x))$ is a transversal of $B_p$.
For example, if $\phi(x)=2x$ we obtain a transversal on the main diagonal of $B_p$.
So we have the following corollary.
\begin{corollary}
Any transversal of $B_p$ not equal to the main diagonal has at least
$\log_2{p}+1$ elements off the main diagonal.
\end{corollary}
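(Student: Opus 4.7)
The plan is to reduce this corollary directly to the orthomorphism distance theorem immediately preceding it, applied with $k=2$. First I would recall the bijection between transversals of $B_p$ and orthomorphisms of $\mathbb{Z}_p$. Given a transversal $\mathcal{T}$, since each row appears exactly once, we may write $\mathcal{T}=\{(x,c(x),\phi(x)) : x \in \mathbb{Z}_p\}$, where $\phi(x) = x + c(x)$ is the symbol in cell $(x,c(x))$ of $B_p$. The fact that each column and each symbol is hit exactly once means that both $x \mapsto c(x) = \phi(x)-x$ and $x \mapsto \phi(x)$ are permutations, so $\phi$ is an orthomorphism. Conversely every orthomorphism yields a transversal in this way, as noted in the paragraph introducing the corollary.

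Next I would identify the main diagonal with a linear orthomorphism. On the main diagonal of $B_p$ the cell $(x,x)$ contains the symbol $2x$, so the diagonal transversal corresponds to $\phi_0(x) = 2x$. Consequently an entry $(x,c(x),\phi(x))$ of an arbitrary transversal lies off the main diagonal precisely when $c(x)\neq x$, i.e.\ when $\phi(x)\neq 2x$. Thus the number of off-diagonal entries of the transversal is exactly the distance $d(\phi,\phi_0)$ in the sense of the previous section.

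Now I would apply the preceding theorem with $k=2$, so that $\phi_0(x)=kx$. For any odd prime $p\geq 5$ we have $2^{-1}\equiv (p+1)/2 \pmod{p}$, whose least non-negative representative is at least $3$, so $K=\min\{2,2^{-1}\}=2$. The theorem then forces $d(\phi,\phi_0)\geq \log_2 p + 1$ whenever $\phi\neq \phi_0$, which is exactly the claim. (The tiny case $p=3$ can be checked by inspection, since $B_3$ has only three transversals.)

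Since the corollary is essentially a repackaging of the theorem, there is no real obstacle; the only thing to be careful about is the cosmetic identification that elements off the main diagonal correspond bijectively to inputs $x$ with $\phi(x)\neq 2x$, and the verification that $K=2$ for the relevant value $k=2$.
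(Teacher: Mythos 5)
Your proposal is correct and is essentially the paper's own (implicit) argument: the corollary is stated as an immediate consequence of the preceding orthomorphism-distance theorem, via the identification of transversals of $B_p$ with orthomorphisms, of the main diagonal with $\phi(x)=2x$, and of off-diagonal cells with points where $\phi(x)\neq 2x$, applied with $k=2$ so that $K=\min\{2,2^{-1}\}=2$. Your explicit verification of the converse direction (every transversal arises from an orthomorphism) and of $K=2$ only fills in details the paper leaves unstated; the $p=3$ caveat is unnecessary since $2^{-1}=2$ there as well.
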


From Theorem \ref{yoyo}, we also have the following. 
\begin{theorem}
There exists a transversal of $B_p$ not equal to the main diagonal which 
has $O(\log{p})$ elements not on the main diagonal. 
\end{theorem}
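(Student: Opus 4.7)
The plan is to translate Theorem \ref{yoyo} directly. That theorem produces a Latin square $L$ orthogonal to $B_p(2)$ obtained by permuting $m = O(\log p)$ rows of $B_p$, and I will convert the associated row permutation into an orthomorphism of $\mathbb{Z}_p$ that agrees with $x \mapsto 2x$ outside a set of size $m$.

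First I would unpack Theorem \ref{yoyo}, via Corollary \ref{striker}, to obtain a permutation $\sigma$ of $\mathbb{Z}_p$ with the following properties: $\sigma(r) = r$ for every $r$ outside a set $R$ of size $m = O(\log p)$; the restriction $\sigma|_R$ is a fixed-point-free permutation of $R$; and, because the perturbed Latin square $(r,c) \mapsto \sigma(r)+c$ is orthogonal to $B_p(2)$, the map $x \mapsto \sigma(x) - 2x$ is a permutation of $\mathbb{Z}_p$ (this last assertion is exactly the orthogonality condition worked out at the start of Section 4).

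The main step is to set $\psi(x) := 2\sigma^{-1}(x)$ and to verify that $\psi$ is an orthomorphism close to $x \mapsto 2x$. That $\psi$ is itself a permutation is automatic. For the orthomorphism condition I would substitute $y = \sigma^{-1}(x)$ and compute $\psi(x) - x = 2y - \sigma(y) = -(\sigma(y)-2y)$; as $x$ ranges over $\mathbb{Z}_p$ so does $y$, so this is a permutation of $\mathbb{Z}_p$ by the assumed property of $\sigma$. Because $\sigma^{-1}$ agrees with the identity on exactly $R^c$ (here I use that $\sigma$ maps $R^c$ bijectively to itself, so $\sigma^{-1}|_{R^c}=\mathrm{id}$ as well, while $\sigma^{-1}|_R$ is still fixed-point-free), we then have $\psi(x) = 2x$ precisely for $x \notin R$.

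Finally, the transversal $\{(x,\psi(x)-x,\psi(x)) : x \in \mathbb{Z}_p\}$ associated with the orthomorphism $\psi$ coincides with the main diagonal $\{(x,x,2x)\}$ for every $x \in R^c$ and differs from it for every $x \in R$, so it has exactly $m = O(\log p)$ cells off the main diagonal (and at least one, since the trade is nontrivial, so it is distinct from the main diagonal). The only real obstacle is identifying the correct bridge between the two settings: once one notices that inverting the row permutation $\sigma$ and rescaling the argument by $2$ converts a ``Latin square close to $B_p$ which is orthogonal to $B_p(2)$'' into an ``orthomorphism close to $x\mapsto 2x$'', the remaining verifications are routine.
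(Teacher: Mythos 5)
Your proposal is correct and follows essentially the same route as the paper, which simply invokes Theorem \ref{yoyo} together with the orthomorphism--transversal correspondence set up earlier in Section 6; your explicit bridge $\psi(x)=2\sigma^{-1}(x)$ is exactly the translation the paper leaves implicit. The verification that $\psi$ is an orthomorphism agreeing with $x\mapsto 2x$ off the permuted rows, and hence yields a transversal with $O(\log p)$ off-diagonal cells, is accurate.
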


\section{A construction for an orthogonal trade with size not divisible by $p$}

In this section we construct an orthogonal trade of size not divisible by $p$ whenever $p\equiv 1$ (mod $6$). 
Figure \ref{figg1} gives the construction for $p=7$. Figure \ref{figg3(p-1)trade} is an example of the construction for $p=13$, 
where the trademate is shown via subscripts.

\begin{figure}
$$\begin{array}{|c|c|c|c|c|c|c|c|c|c|c|c|c|}
\hline
0_4 & 1_5 & 2_6 & 3 & 4_0 & 5_1 & 6_2 & 7 & 8 & 9 & 10 & 11 & 12\\
\hline
1 & 2 & 3 & 4 & 5 & 6 & 7 & 8 & 9 & 10 & 11 & 12 & 0 \\
\hline
2 & 3 & 4 & 5 & 6 & 7 & 8 & 9 & 10 & 11 & 12 & 0 & 1 \\
\hline
3 & 4_8 & 5_9 & 6_7 & 7_4 & 8_5 & 9_6 & 10 & 11 & 12 & 0 & 1 & 2 \\
\hline
4_7 & 5_4 & 6_5 & 7_6 & 8 & 9 & 10 & 11 & 12 & 0 & 1 & 2 & 3 \\
\hline
5 & 6 & 7 & 8 & 9 & 10 & 11 & 12 & 0 & 1 & 2 & 3 & 4  \\
\hline
6 & 7 & 8_{12} & 9_{10} & 10_{11} & 11_{8} & 12_9 & 0 & 1 & 2 & 3 & 4 & 5 \\
\hline
7_{10} & 8_{11} & 9_8 & 10_9 & 11_7 & 12 & 0 & 1 & 2 & 3 & 4 & 5 & 6 \\
\hline
8 & 9 & 10 & 11 & 12 & 0 & 1 & 2 & 3 & 4 & 5 & 6 & 7  \\
\hline
9 & 10 & 11 & 12_0 & 0_1 & 1_2 & 2_{12} & 3 & 4 & 5 & 6 & 7 & 8 \\
\hline
10_0 & 11_1 & 12_2 & 0_{12} & 1_{10} & 2_{11} & 3 & 4 & 5 & 6 & 7 & 8 & 9 \\
\hline
11 & 12 & 0 & 1 & 2 & 3 & 4 & 5 & 6 & 7 & 8 & 9 & 10  \\
\hline
12 & 0 & 1 & 2 & 3 & 4 & 5 & 6 & 7 & 8 & 9 & 10 & 11 \\
\hline
\end{array}$$
\caption{An orthogonal trade of index $(1,4)$ and size $36$ in $B_{13}$}
\label{figg3(p-1)trade}
\end{figure}

Let $k\geq 2$; since $p\equiv 1$ (mod $6$) there exists $k$ such that
 $k^2-k+1$ is divisible by $p$
(since $-3$ is a square modulo $p$ if and only if $p-1$ is divisible by $6$).
Note that if $k$ is a solution then $1-k$ is also a solution modulo $p$; thus we assume in this section that $k$ is an integer such that $2\leq k\leq (p+1)/2$.
We remind the reader that all values are evaluated modulo $p$ with a residue between $0$ and $p-1$.
We define the following subsets $T_0,T_1,\dots ,T_{k-1}$ of $B_{p}$:

$$T_0:=\{(0,j,j),(0,\k+j,\k+j)\mid 0\leq j\leq \k-2\}$$
and if $1\leq i\leq \k-1$,
$$T_i:=\{(i(\k-1),j,i(\k-1)+j)\mid i\leq j\leq 2(\k-1)\}\cup$$
$$\{(i(\k-1)+1,j,i(\k-1)+j+1)\mid 0\leq j\leq \k+i-2\}.$$

We then define $T$ to be the union of all these sets; i.e.
$$T:=\bigcup_{i=0}^{\k-1} T_i.$$

The condition $p>2k-2$ ensures that the above sets are disjoint.
Observe that $|T_0|=2(\k-1)$ and
for each $1\leq i\leq \k-1$, $|T_i|=3\k-2$. Thus $|T|=3\k(\k-1)$ 
which is not divisible by $p$. In the case where $p=k^2-k+1$ (where $p$ and $k$ are integers), the size of $T$ is $3(p-1)$, but in general may be larger relative to $p$.   

We will show that $T$ is a Latin trade which preserves orthogonality between
the Latin squares $B_{p}$ and 
$B_p(\k)$. 


With this aim in view, we define a partial Latin square $T'$ which we
will show is a disjoint mate of $T$.
Let
\begin{eqnarray*}
T_0':=\{(0,j,\k+j),(0,\k+j,j)\mid 0\leq j\leq \k-2\}
\end{eqnarray*}
and if $1\leq i\leq \k-1$,
\begin{eqnarray*}T_i':=&&\{(i(\k-1),j,i(\k-1)+j+\k)\mid i\leq j\leq \k-2\}\cup\\
&&\{(i(\k-1),j,i(\k-1)+j+1)\mid \k-1\leq j\leq \k+i-2 \}\cup\\
&&\{(i(\k-1),j,(i-1)(\k-1)+j)\mid \k+i-1\leq j\leq 2(\k-1)\}\cup\\
&&\{(i(\k-1)+1,j,i(\k-1)+j+\k)\mid 0\leq j\leq i-1\}\cup\\
&&\{(i(\k-1)+1,j,i(\k-1)+j)\mid i\leq j\leq \k-1\}\cup\\
&&\{(i(\k-1)+1,j,i(\k-1)+j-\k+1)\mid \k\leq j\leq \k+i-2\}.
\end{eqnarray*}
Note that for $i=k-1$ the first set in $T_i'$ is empty and for 
$i=0$ the last set is empty. 
We define $T'$ to be the union of the above sets; i.e.
$$T':=\bigcup_{i=0}^{\k-1} T_i'.$$

By observation, $T$ and $T'$ occupy the same set of cells and are disjoint.
We next check that corresponding rows contain the same set of symbols.
This is easy to check for row $0$.
Let $1\leq i\leq\k-1$.
Row $i(\k-1)$ of $T'$ contains the symbols
$\{i(\k-1)+j+\k\mid i\leq j\leq \k-2\}=\{i(\k-1)+j \mid i+\k\leq j\leq 2(\k-1)\}$,
$\{i(\k-1)+j+1\mid \k-1\leq j\leq \k+i-2\}=\{i(\k-1)+j \mid \k\leq j\leq \k+i-1 \}$ and
$\{(i-1)(\k-1)+j\mid \k+i-1\leq j\leq 2(\k-1)\}=\{i(\k-1)+j \mid i\leq j\leq \k-1 \}$.
Thus row $i(\k-1)$ of $T'$ contains the same set of symbols as the corresponding row of $T$.

Next, row $i(\k-1)+1$ of $T'$ contains the symbols
$\{i(\k-1)+j+\k)\mid 0\leq j\leq i-1\}=
\{i(\k-1)+j+1)\mid \k-1\leq j\leq i+\k-2\}$,
$\{i(\k-1)+j)\mid i\leq j\leq \k-1\}=
\{i(\k-1)+j+1)\mid i-1\leq j\leq \k-2\}$ and
$\{i(\k-1)+j-\k+1)\mid \k\leq j\leq \k+i-2\}=
\{i(\k-1)+j+1)\mid 0\leq j\leq i-2\}$.
Thus  row $i(\k-1)+1$ of $T'$ contains the same set of symbols as the corresponding row of $T$.
We have shown that $T$ and $T'$ share the same sets of symbols in correpsonding rows.

We now show this property for the columns.
It suffices to show that each symbol in a column of $T'$ occurs within the same column of $T$.
First consider elements of $T_0'$. Let $0\leq j\leq \k-2$.
Then symbol $j+\k$ in cell $(0,j)$ of $T_0'$ belongs also
to cell $(\k,j)$ of $T_1$. Moreover symbol $j$ in cell $(0,\k+j)$ of $T_0'$
belongs also to cell $((\k-1)^2,\k+j)$ of $T_{\k-1}$ since
$(\k-1)^2+\k$ is divisible by $p$.

In this paragraph we deal with symbols which occur in row $i(\k-1)$ of $T_i'$ for some
$1\leq i\leq \k-1$.
Consider symbol $i(\k-1)+j+\k$ in column $j$ of $T_i'$ where
 $i\leq j\leq \k-2$.
This symbol also lies in cell $((i+1)(\k-1)+1,j)$ of $T_{i+1}$.
Consider symbol $i(\k-1)+j+1$ in column $j$ of $T_i'$ where
$\k-1\leq j\leq \k+i-2$.
This symbol lies in
cell $(i(\k-1)+1,j)$ of $T_i$.
Consider symbol $(i-1)(\k-1)+j$ in column $j$ of $T_i'$ where
$\k+i-1\leq j\leq 2(\k-1)$.
This symbol lies in cell $((i-1)(\k-1),j)$ of $T_{i-1}$.

Finally, to verify that $T'$ is indeed a disjoint mate of $T$,
we look at symbols which occur in row $i(\k-1)+1$ of $T_i'$ for some
$1\leq i\leq \k-1$.
Consider symbol
$i(\k-1)+j+\k$ which occurs in column $j$ of $T_i'$
where $0\leq j\leq i-1$.
This symbol occurs in cell
$((i+1)(\k-1)+1,j)$ of $T_{i+1}$ (if $i<\k-1$) or $T_0$ (if $i=\k-1$).
Next consider symbol
$i(\k-1)+j$ which occurs in column $j$ of
 $T_i'$ where $i\leq j\leq \k-1$.
This symbol occurs in cell
$(i(\k-1),j)$ of $T_i$.
Thirdly, consider symbol
$i(\k-1)+j-\k+1$
of $T_i'$ where
$\k\leq j\leq \k+i-2$.
This symbol occurs in cell
$((i-1)(\k-1),j)$ of $T_{i-1}$.

We have shown that $T$ is a Latin trade in $B_p$ with disjoint mate $T'$.
Next we show orthogonality.
it suffices to show that for each element $(r,c,r+c)\in T$,  there is a cell $(r',c')\in T'$ containing $r+c$ such that $(r',c')$ contains
$r\k+ c$ in $B_p(\k)$ (equivalently, $r'\k+c'=r\k+c$).

Firstly, let $(0,j,j)\in T_0$ where $0\leq j\leq k-2$. 
Then $(p-k,j+k-1,j)\in T_{k-1}'$. 
Next let $1\leq i\leq k-1$.
Let $(i(\k-1),j,i(\k-1)+j)\in T_i$ where $i\leq j\leq \k-1$ ($i\leq j\leq k-2$ when $i=0$).
Then $((i-1)(\k-1),j-1,i(\k-1)+j)\in T_{i-1}'$.
Next let $1\leq i\leq k-1$.
Let $(i(\k-1),j,i(\k-1)+j)\in T_i$ where $\k\leq j\leq \k+i-1$.
Then $(i(\k-1)+1,j-\k,i(\k-1)+j)\in T_i'$.
Let $0\leq i\leq k-2$.
Let $(i(\k-1),j,i(\k-1)+j)\in T_i$ where $\k+i\leq j\leq 2(\k-1)$.
Then $((i+1)(\k-1)+1,j-\k+1,i(\k-1)+j)\in T_{i+1}'$.

Next let $1\leq i\leq k-1$.
Let $(i(\k-1)+1,j,i(\k-1)+1+j)\in T_i$ where $i-1\leq j\leq \k-2$.
Then $(i(\k-1),j+\k,i(\k-1)+1+j)\in T_i'$.
Let $2\leq i\leq k-1$. Let $(i(\k-1)+1,j,i(\k-1)+1+j)\in T_i$ where $0\leq j\leq i-2$.
Then $((i-1)(\k-1),j+\k-1,i(\k-1)+1+j)\in T_{i-1}'$.
Finally let $1\leq i\leq k-1$.
 Let $(i(\k-1)+1,j,i(\k-1)+1+j)\in T_i$ where $\k\leq j\leq \k+i-2$.
Then $((i+1)(\k-1)+1,j+1,i(\k-1)+1+j)\in T_{i+1}'$.



An {\em intercalate} in a Latin square is a $2\times 2$ subsquare. The construction in this section shows the potential of using trades to construct MOLS with particular properties. We demonstrate this with the following theorem. 

\begin{theorem}
Let $p$ be a prime such that $p\equiv 1$ {\rm(mod $6$)}. Then there exists a Latin square $L$ orthogonal to $B_p$ such that $L$ contains an intercalate. 
\end{theorem}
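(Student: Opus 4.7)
The plan is to exhibit an explicit intercalate in the Latin square $L' := (B_p \setminus T) \cup T'$ constructed earlier in this section, which is orthogonal to $B_p(k)$, and then transport the intercalate via the symmetry from Lemma \ref{iceice} to produce a Latin square orthogonal to $B_p$.

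First I would examine the four cells at rows $k-1, k$ and columns $1, k$ of $L'$. Since $p \equiv 1 \pmod 6$ implies $k \geq 3$ (the equation $k^2-k+1 \equiv 0$ has no solution with $k \in \{1,2\}$ for $p \geq 7$), these four cells are distinct. A case-by-case reading of the six clauses defining $T_1'$ (the $i=1$ instance of the construction) shows that $(k-1,1)$, $(k-1,k)$, and $(k,1)$ all lie in $T_1$, with trade values
$$L'(k-1,1) = (k-1)+1+k = 2k,\quad L'(k-1,k) = 0\cdot(k-1)+k = k,\quad L'(k,1) = (k-1)+1 = k,$$
coming respectively from the clauses indexed $i \le j \le k-2$, then $k+i-1 \le j \le 2(k-1)$, and then $i \le j \le k-1$. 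Meanwhile $(k,k) \notin T$, since the only $T_i$ touching row $k$ is $T_1$, whose row-$k$ cells occupy columns $0,\ldots,k-1$. Hence $L'(k,k) = B_p(k,k) = 2k$. Since $k \neq 0$ and $k \neq 2k$ in $\mathbb{Z}_p$, these four values form a genuine intercalate in $L'$ with symbol set $\{k, 2k\}$.

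To produce a Latin square orthogonal to $B_p$ itself, I would then apply the symmetry $\phi:(a,b,ax+b)\mapsto(a,b/k,(ax+b)/k)$ from Lemma \ref{iceice}, which sends $B_p(k)$ to $B_p$ and therefore carries the orthogonal pair $(L', B_p(k))$ to an orthogonal pair $(L, B_p)$ where $L := \phi(L')$. The map $\phi$ acts on columns and on symbols by the single uniform rescaling $x \mapsto x/k$, so $2\times 2$ subsquares are preserved: the intercalate found above becomes an intercalate in $L$ at rows $k-1, k$ and columns $k^{-1}, 1$ with symbols $1, 2$, completing the proof.

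The main obstacle is not conceptual depth but locating the correct four cells inside a piecewise-defined construction with six clauses in $T_i'$. The ``miracle'' that makes the argument go through is that three trade-modified cells and one untouched $B_p$-cell happen to align in a $2\times 2$ pattern with only two distinct symbols; once the right cells have been identified, each claim in the verification reduces to direct arithmetic against the explicit formulas for $T_1$ and $T_1'$.
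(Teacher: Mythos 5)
Your proposal is correct and takes essentially the same approach as the paper: the paper's proof exhibits exactly the same intercalate, observing that $(k-1,1,2k)$, $(k-1,k,k)$ and $(k,1,k)$ lie in $T_1'$ while cell $(k,k)$ is untouched by $T$, so $(k,k,2k)\in L$. Your closing step of transporting the configuration by the isotopy of Lemma \ref{iceice} so that the orthogonal mate is $B_p$ itself rather than $B_p(k)$ is a sound refinement that the paper leaves implicit, since its proof only establishes orthogonality to $B_p(k)$.
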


\begin{proof}
Let $L:=(B_p\setminus T)\cup T'$, where $T$ and $T'$ are defined as 
in this section. We have shown above that $L$ is orthogonal to $B_p(k)$. 
Observe that  
 $(k-1,1,2k),(k-1,k,k)$ and $(k,1,k)$ are each elements of $T_1'$ and thus $L$. 
Finally, cell $(k,k)$ is not included in $T$ so $(k,k,2k)\in L$.  
\end{proof}

\section{Computational results}
In this section, we give some computational results on the spectrum of the possible sizes of orthogonal trades mentioned in the previous sections. These orthogonal trades can be found as ancillary files in \cite{CDD}.

Let $S_p$ be the set of sizes so that an orthogonal trade in $B_p$ of index $(1,k)$ exists for some $k$. For $p=5$, $S_5 = \{0, 10, 15, 20, 25\}$.

The results for $p=7$ and $p=11$ are summarised in the following lemma.

\begin{lemma} The spectrum of the sizes of orthogonal trades for $p=7$ and $p=11$ are $S_7 = \{0, 14, 18, 21, 24, 25, \dots, 49\}$ and $S_{11} = \{0, 22, 33, 36, 37, \dots, 121\}$, respectively.
\end{lemma}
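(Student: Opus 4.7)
The plan is to prove this lemma by exhaustive computer enumeration, which is tractable for $p \in \{7, 11\}$. By Lemma \ref{iceice}, every orthogonal trade in $\mathcal{B}_p$ can be normalised to have index $(1,k)$ with $k \leq k^{-1}$ and to contain $(0,0,0)$; this restricts the search to $k \in \{2,3,6\}$ for $p=7$ and $k \in \{2,3,5,7,10\}$ for $p=11$, together with the trivial trade of size $0$.

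For each admissible $k$, I would enumerate all Latin squares $L$ of order $p$ orthogonal to $B_p(k)$ by standard backtracking, filling $L$ cell by cell and enforcing at each step both the Latin row/column property and the orthogonality condition that the values of $L$ on each fibre $\{(r,c) : kr+c = m\}$ of $B_p(k)$ are distinct. For each completed $L$, the size $|L \setminus B_p|$ is recorded. The symmetry $\phi''$ of Lemma \ref{iceice} may be applied at enumeration time to force $(0,0,0) \in T$ whenever $T$ is non-empty, eliminating redundant counting. For $p \in \{7, 11\}$ the number of orthogonal mates of each $B_p(k)$ is modest (at most a few thousand), so the search terminates in seconds.

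Taking the union of the sizes obtained across all admissible $k$ yields the claimed sets $S_7$ and $S_{11}$. An explicit trade witnessing each non-trivial size is recorded as an ancillary file in \cite{CDD}, serving both as a certificate of realisability and as input for independent verification. The main obstacle is not running time but assurance of completeness — that no intermediate size has been overlooked — which is addressed by performing the backtracking exhaustively rather than stochastically, by independently implementing the enumeration via the trade-matrix correspondence of Section 2 alongside the direct Latin-square search, and by checking that the smallest non-trivial observed sizes are consistent with the lower bounds from Lemma \ref{th3}, Theorem \ref{lowa} and Theorem \ref{lowa2}.
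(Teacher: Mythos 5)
Your approach coincides with the paper's: the lemma is a purely computational assertion, offered with no written argument beyond the exhaustive search itself and the witnessing trades deposited as ancillary files in \cite{CDD}, and your plan (symmetry reduction via Lemma \ref{iceice} to $k\le k^{-1}$, exhaustive backtracking over the Latin squares $L$ orthogonal to $B_p(k)$, recording $|L\setminus B_p|$, and sanity-checking against the theoretical lower bounds) is exactly that computation. One quibble: your feasibility estimate is optimistic, since the number of Latin squares orthogonal to $B_{11}(k)$ is far more than ``a few thousand'' (such mates correspond to ordered decompositions into $11$ disjoint transversals, and $B_{11}$ alone has tens of thousands of transversals), so the exhaustion is heavier than ``seconds'' and genuinely relies on the symmetry reductions you mention --- but this affects only the bookkeeping, not the soundness of the method.
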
 
Note that an orthogonal trade in $B_7$ of size $18$ is given in Figure \ref{figg1}.

Our theoretical results only considered orthogonal trades when $p$ is prime. 
A similar question can be studied for odd values of $p$ in general. 
Here $B_p(1)$ is orthogonal to $B_p(k)$ if and only if $k\not\equiv 1\mod{p}$. 
Then the spectrum of the sizes of orthogonal trades in $B_9$ is the set $\{0, 6, 9, 12, 15, 16, 18, 19, \dots, 81\}$.

In Section \ref{section:entirerows}, we investigated the orthogonal trades in $B_p$ which are constructed by permuting entire rows. These trades preserve orthogonality with one of the $p-1$ MOLS. The possible number of rows needed to be permuted are the elements of sets $\{4, 5\}$, $\{3, 5, 6, 7\}$, $\{5, 6, 7, 8, 9, 10, 11\}$ and  $\{3, 4, 6, 7, 8, 9, 10, 11, 12, 13\}$ for $p=5, 7, 11$ and 13, respectively. 

This idea can be generalised for trades in $B_p$ which preserve orthogonality with more than one of the $p-1$ MOLS. We analyse this question for orders $p=5, 7, 11$ and 13. 

We start by considering the orthogonal trades in $B_p$ which preserve orthogonality with {\em two} other MOLS from the complete set of size $p-1$ - but only those formed by permuting entire rows. So, these orthogonal trades are formed in three MOLS of order $p$. The possible number of rows needed to be permuted are the elements of sets $\{4, 5\}$, $\{6, 7\}$, $\{5, 6, 8, 9, 10, 11\}$ and  $\{4, 6, 8, 9, 10, 11, 12, 13\}$ for $p=5, 7, 11$ and 13, respectively. 

Here, the non-trivial cases occur when the number of rows are not $p-1$ or $p$. So, we continue with only those cases.

Next, we consider the orthogonal trades in $B_p$ which preserve orthogonality with {\em three} other MOLS from the complete set of size $p-1$. The possible number of rows needed to be permuted are the sets $\{5, 9\}$ and $\{6, 11\}$ for $p=11$ and 13, respectively. 

The orthogonal trades which preserve orthogonality with four of the $p-1$ MOLS can be constructed by permuting 6 or 11 rows for $p=13$. Lastly, an orthogonal trade which preserve orthogonality with five of the $p-1$ MOLS cannot be constructed by permuting entire rows for these orders.

\end{document}